\newtheorem{theorem}{Theorem}[section]
\theoremstyle{definition}
\newtheorem{definition}[theorem]{Definition}
\newtheorem{example}[theorem]{Example}
\theoremstyle{remark}
\newtheorem{remark}[theorem]{Remark}
\numberwithin{equation}{section}
\theoremstyle{plain}
\newtheorem{acknowledgement}{Acknowledgement}
\newtheorem{conjecture}{Conjecture}
\newtheorem{corollary}{Corollary}
\newtheorem{problem}{Problem}
\newtheorem{proposition}{Proposition}
\subjclass[2010]{Primary 52C26; Secondary 51M10, 52B70, 53C65}
\begin{document}
\title[Discrete uniformization of the disk]{ Euclidean formulation of discrete uniformization of the disk}
\author{David Glickenstein}
\address{University of Arizona}
\email{glickenstein@math.arizona.edu}
\urladdr{http://math.arizona.edu/\symbol{126}glickenstein}
\keywords{Conformal mappings, circle packing, discrete differential geometry}

\begin{abstract}
Thurston's circle packing approximation of the Riemann Mapping (proven to give
the Riemann Mapping in the limit by Rodin-Sullivan) is largely based on the
theorem that any topological disk with a circle packing metric can be deformed
into a circle packing metric in the disk with boundary circles internally
tangent to the circle. The main proofs of the uniformization use hyperbolic
volumes (Andreev) or hyperbolic circle packings (by Beardon and Stephenson).
We reformulate these problems into a Euclidean context, which allows more
general discrete conformal structures and boundary conditions. The main idea
is to replace the disk with a double covered disk with one side forced to be a
circle and the other forced to have interior curvature zero. The entire
problem is reduced to finding a zero curvature structure. We also show that
these curvatures arise naturally as curvature measures on generalized
manifolds (manifolds with multiplicity) that extend the usual discrete
Lipschitz-Killing curvatures on surfaces.

\end{abstract}
\maketitle

\section{Introduction}

The Riemann Mapping theorem states that any simply connected, open, proper
subset of $\mathbb{C}$ can be conformally mapped to $\mathbb{D}$, and that
this map is unique up to Mobius transformations that fix the disk. Thurston
formulated a way to approximate the mapping guaranteed by the theorem (for
precompact domains in $\mathbb{C}$), and his formulation was proven to
converge to that mapping by Rodin and Sullivan \cite{RS}. Thurston's discrete
conformal mappings are based on mappings of circle packings, and central to
the argument is a theorem that a circle packing of a simply connected, bounded
domain can be deformed to a circle packing of $\mathbb{D}$ while maintaining
the tangency pattern but allowing the circle radii to change. We call such
theorems \emph{discrete uniformization theorems}. These theorems
are based the existence of circle packings on spheres, the original being the Koebe theorem (rediscovered by Andreev and generalized by Thurston):
\begin{theorem}
Let $K$ be a triangulation of the $2$-sphere that is not simplicially equivalent to the
tetrahedron. Then there is a realization of $K$ as a geodesic triangulation of 
the sphere with circles at the vertices such that the circles corresponding to two
vertices on an edge are externally tangent to each other.
\end{theorem}
\noindent The original proofs of this
theorem and some of its generalizations, for instance where the circles have prescribed intersection angles or they appear on other surfaces, can be found in \cite{Koebe, Andr, Andr2, Thurs, MR, GLSW}.

Beardon and Stephenson \cite{BeaSte} (see also \cite{Ste}) were able to
reprove the circle packing theorem via a variant of the Perron method. The
Perron method is a method of solving a partial differential equation on the
interior of the disk (the equation is that the curvature is zero) subject to
the condition that the boundary circles are internally tangent to the boundary
of the disk. The method relies on transferring the problem to a problem of
packing hyperbolic circles. This idea is especially elegant because it turns
the boundary condition to the condition that the boundary circles are
horocycles, i.e., circles of infinite radius. Another advantage to the
formulation of the problem as a problem in hyperbolic geometry is that the
nonuniqueness of the Riemann Mapping Theorem falls out because Mobius
transformations are isometries in hyperbolic space. Bowers and Stephenson were
later able to extend the argument to other boundary conditions by considering
curves of constant curvature \cite{BowSte1}. In particular, they prove
\begin{theorem}[\cite{BowSte1}, Proposition 1]
Let $K$ be a a triangulation of the disk and $\Phi$ be an assignment of values in $[0,\pi/2]$
to the edges of $K$. Given a proper choice of curvatures for boundary vertices, there exists
an assignment of curvatures to interior vertices such that
\begin{enumerate}
\item there is a drawing of 
circles in the hyperbolic plane such that two circles corresponding to two vertices joined by an edge
intersect at the angle prescribed by $\Phi$, 
\item these edges can be given hyperbolic lengths based on the hyperbolic distances between circle centers, and the sum of the angles
of these triangles at
any interior vertex is equal to $2\pi$, and
\item the circles corresponding to boundary vertices have the assigned curvatures.
\end{enumerate}
\end{theorem}
\noindent This theorem, for instance, handles the case that boundary circles intersect the 
unit circle (which is the infinity of the disk model of hyperbolic space) at right angles by
specifying that the boundary circles are geodesics (curvature is zero).

The goal of the present work is to reformulate the problem of discrete
uniformization theorems of disks in terms of Euclidean discrete conformal
structures. In light of the facts that the hyperbolic formulation is
especially elegant and that there has been recent progress on understanding
hyperbolic discrete conformal structures in great generality (e.g., 
\cite{Guo, BPS, GT, ZGZLYG}), one may question the advantage of
reformulating in terms of Euclidean structures. There are several reasons why
having a comprehensive Euclidean treatment could be useful. The primary
advantage is that the Euclidean case can treat otherwise difficult cases such
as the multiplicative discrete conformal structures studied in \cite{Luo1} and
\cite{SSP} with the boundary condition that the triangulation is internally
tangent to the unit circle (note that \cite{BPS} solves this problem by
treating the boundary condition in a different way). The multiplicative
structure is well motivative, and has close connections with the finite
element Laplacian (see, e.g., \cite{G3}). Another advantage of the Euclidean
background structure is that it is more intuitive, making it more accessible
for applications than hyperbolic formulations. Finally, this formulation of a
discrete uniformization theorem motivates the definitions at the end of this
paper on manifolds with multiplicity, giving a notion of how one might try to
use geometric flows on the interior of two disks glued together to find
uniform structures such as the Riemann mapping transformation (note that
Brendle has a slightly different formulation of a geometric flow to this end
in \cite{Bren}).

\begin{acknowledgement}
The author was partially funded by NSF grant DMS 0748283. The author would
like to thank Ken Stephenson and Joseph Fu for helpful conversations.
\end{acknowledgement}

\section{Discrete conformal structures and M-weighted points}

Discrete conformal structures are generalizations of circle packings, circles
with fixed intersection angle, circles with fixed inversive distance, and
multiplicative structures studied in papers such as 
\cite{Thurs, MR, CL, BowSte, Guo, Luo1, SSP}. Discrete
conformal structures were described from an axiomatic framework in \cite{G5}.
In \cite{GT} it is shown that discrete conformal structures have the form
$C_{\alpha,\eta},$ where the length of an edge $\ell_{ij}=\ell\left(
v_{i},v_{j}\right)  $ is given by
\[
\ell_{ij}^{2}=\alpha_{i}e^{2f_{i}}+\alpha_{j}e^{2f_{j}}+2\eta_{ij}%
e^{f_{i}+f_{j}}%
\]
where the $\alpha$ and $\eta$ are part of the conformal structure and the $f$
choose the particular metric in the conformal class. Technically the conformal
structure $C_{\alpha,\eta}$ is a map from a label $f$ to a discrete metric.
However, we will use $f\in C_{\alpha,\eta}$ to mean that $f$ is a label that
determines geometry using the conformal structure $C_{\alpha,\eta}.$ Conformal
structures were generalized to piecewise Euclidean (or polyhedral) manifolds
in \cite{GLSW} by considering only the Delaunay triangulations and our case
where the $\alpha$ are all zero.

Discrete conformal structures are associated to triangulated surfaces, but we
will need to compare these geometric structures with triangulations of points
in the plane that have certain decorations or weights. This is quite common in
the literature on weighted Delaunay triangulations (e.g., \cite{AK, ES, G3}). 
We will define a generalization of weighted points in
order to more carefully describe the setting for discrete conformal
uniformization of Euclidean disks.

\begin{definition}
\label{def: weighted point}A weighted point $\left(  p,W\right)  $ in the
plane is a point $p\in\mathbb{R}^{2}$ together with a weight $W\in\mathbb{R}$.
If $W\geq0,$ we denote the disk $D\left(  p,W\right)  =\left\{  x\in
\mathbb{R}^{2}:\left\vert p-x\right\vert ^{2}\leq W\right\}  $ and the circle
$C\left(  p,W\right)  =\left\{  x\in\mathbb{R}^{2}:\left\vert p-x\right\vert
^{2}=W\right\}  .$
\end{definition}

If $W>0,$ then we can think of the weighted point as a disk centered at $p$
with radius $\sqrt{W}.$ If $W=0,$ we think of the weighted point simply as a
point. When $W>0,$ there are several notions of \textquotedblleft
distance\textquotedblright\ between two weighted points $\left(  p_{1}%
,W_{1}\right)  $ and $\left(  p_{2},W_{2}\right)  ,$ including the distance
between the points $\left\vert p_{1}-p_{2}\right\vert $ and the Laguerre
(power) distance $\left\vert p_{1}-p_{2}\right\vert ^{2}-W_{1}-W_{2}.$ There
are also notions of angle between two intersecting weighted points (circle
intersection angles), inversive distance between nonintersecting weighted
points, and other notions. In order to make these more precise and to
generalize to when $W<0,$ we will expand the notion of a weighted point to an
M-weighted point. The M-weighted points correspond to the caps in \cite{Wil}.

\begin{definition}
Given $\xi=\left(  a,b,c,d\right)  $ and $\zeta=\left(  x,y,z,w\right)  $ in
$\mathbb{R}^{4},$ the Minkowski product is
\[
\xi\ast\zeta=ax+by+cz-dw.
\]

\end{definition}

We now define M-weighted points, which is short for Minkowski-weighted points.

\begin{definition}
Given a point $\xi=\left(  \xi^{1},\xi^{2},\xi^{3},\xi^{4}\right)
\in\mathbb{R}^{4}\setminus\left\{  0\right\}  $ with $\xi^{3}<\xi^{4},$ we may
define the map $p\left(  \xi\right)  =\left(  \frac{\xi^{1}}{\xi^{4}-\xi^{3}%
},\frac{\xi^{2}}{\xi^{4}-\xi^{3}}\right)  $ and $W\left(  \xi\right)
=\frac{\xi\ast\xi}{\left(  \xi^{4}-\xi^{3}\right)  ^{2}}.$ We call $\xi$ an
\emph{M-weighted point}. An M-weighted point $\xi$ has a corresponding
weighted point $\left(  p\left(  \xi\right)  ,W\left(  \xi\right)  \right)  $.
We use $\mathbb{R}_{\bot}^{4}$ to denote the points $\xi\in\mathbb{R}^{4}$
satisfying $\xi^{3}<\xi^{4},$ the set of possible $M$-weighted points.
\end{definition}

\begin{remark}
We may extend this map to a map $\mathbb{R}^{4}\setminus\left\{  0\right\}
\rightarrow S^{2}\times S^{1}$ using the one point compactification
formulations of $S^{2}$ and $S^{1}.$ The first component is essentially the
Hopf map (if we restrict to unit vectors in $\mathbb{R}^{4}$).
\end{remark}

The preimage of a weighted point $\left(  p,W\right)  $ is a line of points in
$\mathbb{R}^{4}\setminus\left\{  0\right\}  .$ Often we will choose the
preimage that makes $\xi^{4}-\xi^{3}=1,$ so that for $\left(  \left(
x,y\right)  ,W\right)  $ we choose
\[
\xi=\left(  x,y,\frac{1}{2}\left(  x^{2}+y^{2}-W-1\right)  ,\frac{1}{2}\left(
x^{2}+y^{2}-W+1\right)  \right)  .
\]
Note that if we pull back the Minkowski product via rescaling to so that
$\xi^{4}-\xi^{3}=1$ we get the Klein model for hyperbolic 3-space, and we can
consider (1-point compactified) Euclidean 2-space as the boundary at infinity
of hyperbolic 3-space.

An M-weighted point can sometimes be interpreted as a disk or point, and this
leads to geometric interpretations for the product *. The following
proposition is easily verified, and described well in the work of Wilker
\cite{Wil}. Recall $C\left(  p,W\right)  $ and $D\left(  p,W\right)  $ as in
Definition \ref{def: weighted point}.

\begin{proposition}
Let $\xi,\zeta\in\mathbb{R}_{\bot}^{4}.$ The following are immediate
consequences of Theorem 8 in \cite{Wil}.

\begin{itemize}
\item The unit disk corresponds to the point $\left(  0,0,1,0\right)  $ (or
any point $\left(  0,0,t,0\right)  $). We denote $U=\left(  0,0,1,0\right)  .$

\item We always have
\[
\left\vert p\left(  \xi\right)  -p\left(  \zeta\right)  \right\vert
^{2}=\left(  \frac{\xi}{\xi^{4}-\xi^{3}}-\frac{\zeta}{\zeta^{4}-\zeta^{3}%
}\right)  \ast\left(  \frac{\xi}{\xi^{4}-\xi^{3}}-\frac{\zeta}{\zeta^{4}%
-\zeta^{3}}\right)
\]

\item If $\xi\ast\xi>0$ and $\zeta\ast\zeta>0$ and $\xi\ast\zeta\leq1$ then
the circles $C\left(  p\left(  \xi\right)  ,W\left(  \xi\right)  \right)  $
and $C\left(  p\left(  \zeta\right)  ,W\left(  \zeta\right)  \right)  $
intersect with angle $\theta$ satisfying
\[
\cos\theta=\frac{\xi\ast\zeta}{\sqrt{\xi\ast\xi}\sqrt{\zeta\ast\zeta}}.
\]

\item If $\xi\ast\xi>0$ and $\zeta\ast\zeta>0$ and $\xi\ast\zeta>1$ then the
disks $D\left(  p\left(  \xi\right)  ,W\left(  \xi\right)  \right)  $ and
$D\left(  p\left(  \zeta\right)  ,W\left(  \zeta\right)  \right)  $ are
disjoint and have inversive distance $\delta$ satisfying%
\[
\cosh\delta=\frac{\xi\ast\zeta}{\sqrt{\xi\ast\xi}\sqrt{\zeta\ast\zeta}}.
\]

\item If $\xi\ast\xi=0$ and $\zeta\ast\zeta>0$ then the circle $C=C\left(
p\left(  \zeta\right)  ,W\left(  \zeta\right)  \right)  $ and the point
$p=p\left(  \xi\right)  $ satisfy
\[
P_{C}\left(  p\right)  =\left\vert p-p\left(  \zeta\right)  \right\vert
^{2}-W\left(  \zeta\right)  =\frac{\xi\ast\zeta}{\sqrt{\zeta\ast\zeta}}.
\]
where $P_{C}$ is the power (see, e.g., \cite{AK}) In particular, $p\in C$ if
and only if $\xi\ast\zeta=0.$

\item If $\xi\ast\xi=0$ and $\zeta\ast\zeta=0$ then
\[
\left\vert p\left(  \xi\right)  -p\left(  \zeta\right)  \right\vert ^{2}%
=\frac{\xi\ast\zeta}{\left(  \xi^{4}-\xi^{3}\right)  \left(  \zeta^{4}%
-\zeta^{3}\right)  }%
\]

\end{itemize}
\end{proposition}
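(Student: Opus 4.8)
The plan is to identify each M-weighted point $\xi$ with its corresponding ``cap'' in the sense of \cite{Wil} via the maps $p$ and $W$, so that the six bullets become a transcription of the inversive-geometric data computed in Theorem~8 of \cite{Wil}; equivalently --- and this is the route I would actually write out --- each bullet reduces to a one-line computation with a conveniently chosen representative. Throughout I would work with the representative normalized so that $\xi^{4}-\xi^{3}=1$, namely $\xi=\bigl(x,y,\tfrac12(x^{2}+y^{2}-W-1),\tfrac12(x^{2}+y^{2}-W+1)\bigr)$ for the weighted point $\bigl((x,y),W\bigr)$. This loses nothing: in the two bullets about $\cos\theta$ and $\cosh\delta$ the quantity in question is a ratio that is homogeneous of degree $0$ separately in $\xi$ and in $\zeta$, so the choice of representative is immaterial, while in the remaining bullets the normalizing factors $\xi^{4}-\xi^{3}$ appear explicitly in the statement.

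The computation that does essentially all of the work is the expansion of the Minkowski product in these representatives. Writing $\xi$ for the normalized representative of $(p_{1},W_{1})$ and $\zeta$ for that of $(p_{2},W_{2})$, a direct expansion yields $\xi\ast\xi=W_{1}$ together with
\[
\xi\ast\zeta=\tfrac12\bigl(W_{1}+W_{2}-\left\vert p_{1}-p_{2}\right\vert ^{2}\bigr).
\]
From this the bullets follow in turn. The first is the special case $p_{1}=(0,0)$, $W_{1}=1$ (its preimage being the line through all $(0,0,t,0)$). For the second, note that the difference of the two normalized representatives has equal third and fourth coordinates, so its Minkowski self-product collapses to the Euclidean square of the difference of the first two coordinates, which is $\left\vert p(\xi)-p(\zeta)\right\vert ^{2}$. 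For the third and fourth, substitute $W_{i}=r_{i}^{2}$: the identity above exhibits $\xi\ast\zeta$ as $r_{1}r_{2}$ times the law-of-cosines expression for the cosine of the angle between the two circles at an intersection point, respectively times the standard expression for the inversive distance of two disjoint circles, and the sign hypothesis on $\xi\ast\zeta$ is exactly what separates the intersecting regime from the disjoint one and pins down the correct branch. For the fifth, take $W_{1}=0$, so that $\xi$ represents the point $p_{1}$; then $\xi\ast\zeta=\tfrac12\bigl(W_{2}-\left\vert p_{1}-p_{2}\right\vert ^{2}\bigr)$ is the power $P_{C}(p_{1})$ up to the normalizing constant recorded in \cite{Wil}, and in particular it vanishes precisely when $p_{1}\in C$. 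For the last, take $W_{1}=W_{2}=0$ and undo the rescaling back to the representatives with $\xi^{4}-\xi^{3}=\zeta^{4}-\zeta^{3}=1$.

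I do not expect a real obstacle: the content is bookkeeping, and the only place that needs attention is orientation. An M-weighted point with $W>0$ carries the data of an \emph{oriented} circle, not merely a circle, and the Minkowski product detects the relative orientation of two such circles; this is why the third and fourth bullets are phrased as inequalities on $\xi\ast\zeta$ (which is what distinguishes the nested from the externally disjoint configurations, and what fixes the sign of $\cos\theta$) rather than as blanket statements about all intersecting or all disjoint pairs, and it is also why landing exactly on the constants of \cite{Wil} in the fifth and sixth bullets requires carrying the factors $\xi^{4}-\xi^{3}$ and the relevant signs through the computation.
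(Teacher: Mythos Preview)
The paper does not supply a proof of this proposition: it is introduced with the sentence ``The following proposition is easily verified, and described well in the work of Wilker \cite{Wil},'' and the statement itself says the bullets are ``immediate consequences of Theorem~8 in \cite{Wil}.'' Your proposal is precisely the kind of direct verification the paper is gesturing at---normalize to $\xi^{4}-\xi^{3}=1$, record the key identity $\xi\ast\zeta=\tfrac12\bigl(W_{1}+W_{2}-\lvert p_{1}-p_{2}\rvert^{2}\bigr)$, and read off each bullet---so the approaches agree in spirit, with yours simply being explicit where the paper is not.

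One small caution: when you actually carry out the computation for the fifth and sixth bullets you will find that the constants in the paper's displayed formulas do not come out exactly as written (for instance, expanding the second bullet with $\xi\ast\xi=\zeta\ast\zeta=0$ gives $\lvert p(\xi)-p(\zeta)\rvert^{2}=-2\,\xi\ast\zeta/\bigl((\xi^{4}-\xi^{3})(\zeta^{4}-\zeta^{3})\bigr)$, not the formula stated). Your hedge ``up to the normalizing constant recorded in \cite{Wil}'' is therefore doing real work, and you should be prepared to say that the computation verifies the bullets up to such harmless normalization factors rather than literally as printed.
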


We can also consider triangulations in the plane by M-weighted points.

\begin{definition}
A \emph{M-weighted triangulation} $\left(  T,\xi\right)  $ of a planar region
$D$ is a triangulation $T$, together with a map
\[
\xi:V\left(  T\right)  \rightarrow\mathbb{R}_{\bot}^{4},
\]
where we will use $V\left(  T\right)  ,$ $E\left(  T\right)  ,$ and $F\left(
T\right)  $ to denote the vertices, edges, and faces of $T,$ respectively.
\end{definition}

The Mobius group acts naturally on an M-weighted triangulation (see \cite{Wil}).

\begin{proposition}
\label{prop: groups}The Mobius group acts on $\left(  T,\xi\right)  $ by
linear maps $L$ such that for each $v$ and $w,$ $\left(  L\xi_{v}\right)
\ast\left(  L\xi_{w}\right)  =\xi_{v}\ast\xi_{w}.$ There is a subgroup of the
Mobius group corresponding the the Euclidean transformations of $p\left(
\xi\right)  $ that preserve the weights $W\left(  \xi\right)  .$
\end{proposition}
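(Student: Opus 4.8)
The plan is to reduce the statement to the generators of the Mobius group and then invoke the dictionary of the preceding proposition (this is essentially Wilker's description of the Mobius group as the inversive group, \cite{Wil}). First observe that $\ast$, viewed on $\mathbb{R}^{4}\setminus\{0\}$, is a nondegenerate symmetric bilinear form of Lorentzian signature $(3,1)$, so the linear maps $L$ satisfying $(L\xi)\ast(L\zeta)=\xi\ast\zeta$ are precisely the elements of $O(3,1)$. Since the Mobius group of the plane is generated by inversions in circles and reflections in lines, it suffices to attach to each such generator a linear map in $O(3,1)$ that induces it on the (compactified) space of M-weighted points; an arbitrary Mobius transformation then corresponds to the composition of the associated linear maps, which is again in $O(3,1)$, and this assignment is a group homomorphism, injective because a transformation acting trivially on every M-weighted point in particular acts trivially on every point and so is the identity.

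For an inversion $\iota_{C}$ in a circle $C$, pick an M-weighted point $\nu$ representing $C$; by the preceding proposition $\nu\ast\nu>0$ because $C$ is a genuine circle, so after rescaling we may take $\nu\ast\nu=1$. Let $L_{\nu}(\xi)=\xi-2(\xi\ast\nu)\,\nu$ be the Minkowski (Householder) reflection in the hyperplane $\nu^{\perp}$; a two-line computation using $\nu\ast\nu=1$ and the symmetry of $\ast$ gives $(L_{\nu}\xi)\ast(L_{\nu}\zeta)=\xi\ast\zeta$, so $L_{\nu}\in O(3,1)$. To see that $L_{\nu}$ realizes $\iota_{C}$, recall that a disk is determined by its boundary circle, that a circle is determined by its power function (or by three points it passes through), and that all of these data are expressed through $\ast$ via the incidence condition $\xi\ast\zeta=0$, the power formula, and the intersection-angle and inversive-distance formulas of the preceding proposition; since $L_{\nu}$ preserves $\ast$ it intertwines those geometric operations, and specializing the normalized representatives one checks directly that $p(L_{\nu}\xi)=\iota_{C}(p(\xi))$ and that $W(L_{\nu}\xi)$ is the squared radius of the image, with $L_{\nu}$ fixing $\nu$ and fixing $p(\xi)$ exactly when $p(\xi)\in C$. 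Reflections in lines are treated the same way, taking $\nu$ to be the (boundary, not in $\mathbb{R}_{\bot}^{4}$) representative of the line, or else by conjugating an inversion. This proves the first assertion, with the understanding that $L$ is honestly defined on all of $\mathbb{R}^{4}\setminus\{0\}$ (equivalently on $S^{2}\times S^{1}$ as in the Remark) and restricts to a self-map of a given M-weighted triangulation $(T,\xi)$ precisely when the image again lies in $\mathbb{R}_{\bot}^{4}$.

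For the second assertion, let $G$ be the set of Mobius transformations whose associated linear map $L$ satisfies $W(L\xi)=W(\xi)$ for all $\xi$. Since $W(\xi)=0$ characterizes the M-weighted points that are ordinary points and $W(\xi)>0$ equals the squared radius of the corresponding disk, any element of $G$ carries points to points---hence fixes the point at infinity and is therefore a Euclidean similarity $x\mapsto\lambda Rx+t$ with $R\in O(2)$ and $\lambda>0$---and moreover preserves every radius, which forces $\lambda=1$. Conversely any Euclidean isometry $x\mapsto Rx+t$ sends $D(p,W)$ to $D(Rp+t,W)$ and hence preserves all weights, and by the first part it acts by a linear map preserving $\ast$, which one may write explicitly as a $4\times4$ matrix built from $R$ and $t$ acting on the normalized slice $\xi^{4}-\xi^{3}=1$. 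Therefore $G$ is exactly the Euclidean isometry group $\mathrm{Isom}(\mathbb{R}^{2})=O(2)\ltimes\mathbb{R}^{2}$, i.e. the Euclidean transformations of $p(\xi)$ that preserve $W(\xi)$, as claimed.

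The argument is essentially formal once the dictionary of the preceding proposition is available, so the real work is bookkeeping: keeping straight the two normalizations---the slice $\xi^{4}-\xi^{3}=1$ used to recover $p$ and $W$, versus $\nu\ast\nu=1$ in the reflection formula---and checking that the linear reflections reproduce the nonlinear Mobius action on $p$ and $W$. The one genuinely delicate point is the behavior at infinity: a circle inversion moves the point $\infty$ (the null direction $(0,0,1,1)$) to a finite point and carries disks through the corresponding center out of $\mathbb{R}_{\bot}^{4}$, so the clean statement lives on the compactification $S^{2}\times S^{1}$, and passing back to honest M-weighted triangulations requires both $(T,\xi)$ and its image to be triangulations by M-weighted points---an issue that does not arise for the subgroup $G$, which preserves $\mathbb{R}_{\bot}^{4}$.
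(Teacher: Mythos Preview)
Your proof is correct. The paper does not supply its own proof of this proposition; it simply states the result with a parenthetical reference to Wilker \cite{Wil} and moves on, so there is no argument to compare against beyond that citation. What you have written is essentially the relevant content of Wilker's paper unpacked in the present notation: identifying the $\ast$-preserving linear group as $O(3,1)$, realizing circle inversions as Householder reflections in spacelike unit vectors, and then isolating the Euclidean subgroup as the stabilizer of the weight function (equivalently, of the null direction $(0,0,1,1)$ at infinity). Your closing remarks about the compactification $S^{2}\times S^{1}$ and the failure of a general $L\in O(3,1)$ to preserve $\mathbb{R}_{\bot}^{4}$ are a welcome clarification that the paper leaves implicit.
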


The M-weighted points with the connectivity of the triangulation induce a
discrete conformal structure. Note that throughout this paper, we will assume
triangulations are proper, in the sense that each edge has two distinct
vertices and that there is at most one edge connecting two vertices. Thus we
can refer to an edge $vw$ between vertices $v$ and $w$ without confusion. Much
of the work is independent of this restriction, but the statements are much
clearer in this form.

\begin{proposition}
\label{Prop:conf and mweighted}Let $T$ be a triangulation of a plane region
with vertex points $\left\{  p_{v}\right\}  _{v\in V\left(  T\right)  }$ in
the plane. For each $v\in V\left(  T\right)  $ and $vw\in E\left(  T\right)  $
fix $\alpha_{v}$ and $\eta_{vw}.$ There is a bijection between M-weighted
triangulations $\left(  T,\xi\right)  $ such that
\begin{align*}
\alpha_{v}  &  =\xi_{v}\ast\xi_{v},\\
\eta_{vw}  &  =-\xi_{v}\ast\xi_{w},\text{ and }\\
p\left(  \xi_{v}\right)   &  =p_{v}%
\end{align*}
and conformal structures $C_{\alpha,\eta}$ on $T$ such that $\ell
_{vw}=\left\vert p_{v}-p_{w}\right\vert .$
\end{proposition}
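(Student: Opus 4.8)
The plan is to exhibit explicit maps in both directions and check that they are mutually inverse; the whole computation is a rewriting of the distance identity in the preceding proposition. For $\xi\in\mathbb{R}_{\bot}^{4}$ write $\mu_{\xi}=\xi^{4}-\xi^{3}$, which is positive by definition of $\mathbb{R}_{\bot}^{4}$. Expanding the second bullet of that proposition by bilinearity and symmetry of $\ast$ gives
\[
\left\vert p(\xi)-p(\zeta)\right\vert^{2}=\frac{\xi\ast\xi}{\mu_{\xi}^{2}}-\frac{2\,\xi\ast\zeta}{\mu_{\xi}\mu_{\zeta}}+\frac{\zeta\ast\zeta}{\mu_{\zeta}^{2}}.
\]
Matching this against $\ell_{vw}^{2}=\alpha_{v}e^{2f_{v}}+\alpha_{w}e^{2f_{w}}+2\eta_{vw}e^{f_{v}+f_{w}}$ dictates the correspondence $e^{f_{v}}=1/\mu_{\xi_{v}}$, that is, $f_{v}=-\log\!\left(\xi_{v}^{4}-\xi_{v}^{3}\right)$. (The right-hand side of the claimed bijection is to be read, in the paper's notational convention, as the set of labels $f$ whose associated $C_{\alpha,\eta}$-metric has $\ell_{vw}=\left\vert p_{v}-p_{w}\right\vert$ on every edge $vw$.)

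\emph{Forward map.} Given an M-weighted triangulation $(T,\xi)$ with $\xi_{v}\ast\xi_{v}=\alpha_{v}$, $-\xi_{v}\ast\xi_{w}=\eta_{vw}$, and $p(\xi_{v})=p_{v}$, set $f_{v}=-\log\!\left(\xi_{v}^{4}-\xi_{v}^{3}\right)$; this is well defined because $\xi_{v}^{4}-\xi_{v}^{3}>0$. Substituting the three hypotheses into the displayed identity turns it into $\left\vert p_{v}-p_{w}\right\vert^{2}=\alpha_{v}e^{2f_{v}}+\alpha_{w}e^{2f_{w}}+2\eta_{vw}e^{f_{v}+f_{w}}=\ell_{vw}^{2}$, so $f$ is a label in $C_{\alpha,\eta}$ whose metric is exactly the Euclidean one. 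No separate verification that this is a bona fide metric is required, since the edge lengths literally equal the distances between the prescribed planar vertices.

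\emph{Backward map.} Given a label $f$ with $\ell_{vw}=\left\vert p_{v}-p_{w}\right\vert$ for all edges, I would reconstruct $\xi_{v}$ from the weighted point it should represent together with the value its coordinate $\xi^{4}-\xi^{3}$ must take. The key fact here is that the preimage in $\mathbb{R}_{\bot}^{4}$ of a weighted point under $\xi\mapsto(p(\xi),W(\xi))$ is a ray on which $\xi^{4}-\xi^{3}$ is a bijective parameter onto $(0,\infty)$: the representative with $\xi^{4}-\xi^{3}=\mu$ is $\mu$ times the normalized lift displayed in the excerpt (valid for any real weight). So put $W_{v}=\alpha_{v}e^{2f_{v}}$, let $\xi_{0,v}$ be the normalized lift of $(p_{v},W_{v})$, and define $\xi_{v}=e^{-f_{v}}\xi_{0,v}$. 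Then $\xi_{v}^{4}-\xi_{v}^{3}=e^{-f_{v}}>0$ (so $\xi_{v}\in\mathbb{R}_{\bot}^{4}$ and in particular $\xi_{v}\neq 0$), $p(\xi_{v})=p_{v}$, and $\xi_{v}\ast\xi_{v}=e^{-2f_{v}}W_{v}=\alpha_{v}$. Feeding these into the displayed identity and using $\ell_{vw}=\left\vert p_{v}-p_{w}\right\vert$ forces $-\xi_{v}\ast\xi_{w}=\eta_{vw}$, so $(T,\xi)$ lies in the source set.

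Finally I would check that the two maps compose to the identity. Forward then backward returns the unique element of $\mathbb{R}_{\bot}^{4}$ lying over $(p(\xi_{v}),W(\xi_{v}))$ with fourth-minus-third coordinate equal to $\xi_{v}^{4}-\xi_{v}^{3}$, which is $\xi_{v}$ itself; backward then forward returns $-\log e^{-f_{v}}=f_{v}$. The one point demanding care is the normalization bookkeeping for lifts into $\mathbb{R}^{4}$ — that prescribing $\xi^{4}-\xi^{3}$ singles out a unique, nonzero representative on the preimage ray — but once that is in place everything reduces to direct substitution into the single distance identity, so I do not anticipate a real obstacle.
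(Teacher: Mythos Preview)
Your proof is correct and follows the same approach as the paper: derive the forward map $f_{v}=-\log(\xi_{v}^{4}-\xi_{v}^{3})$ from the distance identity, then invert by lifting the weighted point $(p_{v},\alpha_{v}e^{2f_{v}})$ at the appropriate scale. You are more thorough than the paper in checking that the two maps are mutual inverses, and your scaling $e^{-f_{v}}$ is the right one (the paper's displayed converse formula carries a sign slip, writing $e^{f_{v}}$ where $e^{-f_{v}}$ is needed).
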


\begin{proof}
Given $\xi$, we recall that
\begin{align*}
\left\vert p_{v}-p_{w}\right\vert ^{2}  &  =\left(  \frac{\xi_{v}}{\left(
\xi_{v}^{4}-\xi_{v}^{3}\right)  }-\frac{\xi_{w}}{\left(  \xi_{w}^{4}-\xi
_{w}^{3}\right)  }\right)  \ast\left(  \frac{\xi_{v}}{\left(  \xi_{v}^{4}%
-\xi_{v}^{3}\right)  }-\frac{\xi_{w}}{\left(  \xi_{w}^{4}-\xi_{w}^{3}\right)
}\right) \\
&  =\alpha_{v}\left(  \xi_{v}^{4}-\xi_{v}^{3}\right)  ^{-2}+\alpha_{w}\left(
\xi_{w}^{4}-\xi_{w}^{3}\right)  ^{-2}+2\eta_{vw}\left(  \xi_{v}^{4}-\xi
_{v}^{3}\right)  ^{-1}\left(  \xi_{w}^{4}-\xi_{w}^{3}\right)  ^{-1}%
\end{align*}
and so we find that $\xi$ corresponds to $f_{v}=-\log\left(  \xi_{v}^{4}%
-\xi_{v}^{3}\right)  .$ Conversely, given $f\in C_{\alpha,\eta},$ we can take
\[
\xi_{v}=\left(  e^{f_{v}}p_{v},\frac{e^{f_{v}}}{2}\left(  \left\vert
p_{v}\right\vert ^{2}-\alpha_{v}e^{2f_{v}}-1\right)  ,\frac{e^{f_{v}}}%
{2}\left(  \left\vert p_{v}\right\vert ^{2}-\alpha_{v}e^{2f_{v}}+1\right)
\right)  .
\]

\end{proof}

\section{Conformal uniformization of the Euclidean disk}

\subsection{Formulation of the problem}

We will follow the notation in Stephenson's book \cite{Ste}. Recall the
following definition.

\begin{definition}
A combinatorial closed disk $S$ is a simplicial $2$-complex that triangulates
a topological closed disk, i.e., $S$ is finite, simply connected, and has
nonempty boundary. We use $\mathring{S}$ to denote the interior of the disk
and $\partial S$ to denote the boundary.
\end{definition}

We know that any geometric realization of the boundary of $S$ must be
homeomorphic to the circle. The notation is such that the vertices of $S$ are
partitioned into $V\left(  \mathring{S}\right)  $ and $V\left(  \partial
S\right)  .$

We can now state the problem of discrete conformal uniformization of a
Euclidean disk.

\begin{problem}
\label{packing problem 1}Let $U\in\mathbb{R}^{4}$ denote the representation of
the unit disk as an M-weighted point. Let $S$ be a combinatorial closed disk
and let $C_{\alpha,\eta}$ be a conformal structure and $\mu:V\left(  \partial
S\right)  \rightarrow\mathbb{R}$. Then find $\xi:V\left(  S\right)
\rightarrow\mathbb{R}_{\bot}^{4}$ such that
\begin{align*}
\xi_{v}\ast\xi_{v}  &  =\alpha_{v}\\
\xi_{v}\ast\xi_{w}  &  =\eta_{vw}%
\end{align*}
for all $v\in V\left(  S\right)  $ and all $vw\in E\left(  S\right)  ,$ and
\[
\xi_{v}\ast U=\left(  \xi_{v}^{4}-\xi_{v}^{3}\right)  \mu_{v}%
\]
for all $v\in V\left(  \partial S\right)  .$
\end{problem}

While this formulation looks a bit mysterious, consider the following cases:

\begin{itemize}
\item If we choose a conformal structure such that $\alpha_{v}=1$ for all
vertices $v\in V\left(  S\right)  $, $\eta_{e}=1$ for all $e\in E\left(
S\right)  ,$ and $\mu_{v}=-1$ for all $v\in$ $V\left(  \partial S\right)  ,$
this corresponds to circle packings with boundary circles internally tangent
to the disk (see \cite{Ste} for a fairly comprehensive coverage of this
well-studied case).

\item If we choose a conformal structure such that $\alpha_{v}=1$ for all
vertices $v\in V\left(  S\right)  $, $\eta_{e}=1$ for all $e\in E\left(
S\right)  ,$ and $\mu_{v}=0$ for all $v\in$ $V\left(  \partial S\right)  ,$
this corresponds to circle packings with boundary circles orthogonal to the
unit disk.

\item If we choose a conformal structure such that $\alpha_{v}=0$ for all
vertices $v\in V\left(  S\right)  $, $\eta_{e}>0$ for all $e\in E\left(
S\right)  ,$ and $\mu_{v}=0$ for all $v\in$ $V\left(  \partial S\right)  $,
this corresponds to triangulations with boundary points on the unit circle
with the multiplicative conformal structure.
\end{itemize}

Particular cases of this problem have been solved by Andreev-Thurston
\cite{Thurs}, Beardon-Stephenson \cite{BeaSte}, and Bowers-Stephenson
\cite{BowSte1}. The reformulation for the multiplicative conformal structure
with boundary vertices on the unit circle as also handled in \cite{BPS} in a
different way by prescribing boundary angles on a half plane.

\subsection{Reformulation as a curvature problem}

We return to the case of an abstract triangulated surface with a discrete
conformal structure. Recall we can define curvature at an interior vertex as follows.

\begin{definition}
The curvature $K_{v}$ at an interior vertex $v$ is defined to be
\[
K_{v}=2\pi-\sum_{f}\theta\left(  v<f\right)
\]
where $f$ runs over all faces and $\theta\left(  v<f\right)  $ is the angle at
vertex $v$ in face $f$ (understood to be zero if the face $f$ does not contain
$v$).
\end{definition}

We may use a monodromy theorem to see how triangulations labels giving zero
curvature correspond to M-weighted points in the plane.

\begin{theorem}
[Monodromy]\label{monodromy theorem 1}If $f\in C_{\alpha,\eta}$ is a label
such that $K_{v}=0$ for all $v\in V\left(  \mathring{S}\right)  $ there exist
points $p:V\left(  S\right)  \rightarrow\mathbb{R}^{2}$ such that
$\ell_{vv^{\prime}}=\left\vert p\left(  v\right)  -p\left(  v^{\prime}\right)
\right\vert $ for any edge $e=vv^{\prime}.$ The map $P$ is unique up to
Euclidean isometry. In fact, there exist $\xi:V\left(  S\right)
\rightarrow\mathbb{R}_{\bot}^{4}$ such that $p\left(  \xi\right)  =p$ and
$\xi_{v}\ast\xi_{v}=\alpha_{v}$ and $\xi_{v}\ast\xi_{w}=\eta_{vw}$ for edges
all $vw\in E\left(  S\right)  .$
\end{theorem}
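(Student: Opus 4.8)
The plan is the classical developing-map (monodromy) argument for piecewise-flat metrics, followed by an appeal to Proposition \ref{Prop:conf and mweighted} to promote the resulting configuration of points in $\mathbb{R}^{2}$ to a configuration of M-weighted points in $\mathbb{R}_{\bot}^{4}$.

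First I would construct the developing map. Because $f\in C_{\alpha,\eta}$ is a label, the edge lengths $\ell_{vw}$ realize each face of $S$ as a nondegenerate Euclidean triangle, so $S$ inherits a piecewise-flat cone metric whose cone points lie among the vertices. Fix an orientation of $S$, choose a base face $f_{0}$, and isometrically embed it in $\mathbb{R}^{2}$. Given any path $f_{0},f_{1},\dots,f_{k}$ of successively edge-adjacent faces, extend the embedding step by step, placing each $f_{i+1}$ on the far side of the edge it shares with $f_{i}$; this yields an isometric placement of $f_{k}$ in the plane depending a priori on the path. The associated germ---the developing map---is thus a well-defined map on the universal cover of $S$, and the failure of path-independence along a closed face-path is measured by a holonomy element of the orientation-preserving Euclidean isometry group.

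Next I would show this holonomy is trivial. Since $S$ triangulates a disk it is simply connected, so every closed face-path is, rel basepoint, a product of conjugates of elementary loops, each of which encircles a single interior vertex $v$. Developing around such an elementary loop places the successive triangles incident to $v$ with a common corner at the developed image of $v$, and the accumulated rotation about that point is $\sum_{f}\theta(v<f)$; hence the holonomy of the elementary loop is a rotation (about the common developed corner) by $2\pi-\sum_{f}\theta(v<f)=K_{v}$, which vanishes by hypothesis. Therefore every holonomy element is trivial, the developing map descends to a single-valued map $S\to\mathbb{R}^{2}$, and restricting to vertices gives $p:V(S)\to\mathbb{R}^{2}$ with $|p(v)-p(v')|=\ell_{vv'}$ on every edge. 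For uniqueness, the only freedom exercised was the initial isometric placement of $f_{0}$, and any two isometric placements of a single triangle differ by an element of $\mathrm{Isom}(\mathbb{R}^{2})$; so $p$ is unique up to Euclidean isometry.

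Finally, with the points $p_{v}$ in hand together with the label $f$ and the data $\alpha,\eta$, I would invoke Proposition \ref{Prop:conf and mweighted}: the formula
\[
\xi_{v}=\left(e^{f_{v}}p_{v},\ \tfrac{e^{f_{v}}}{2}\left(|p_{v}|^{2}-\alpha_{v}e^{2f_{v}}-1\right),\ \tfrac{e^{f_{v}}}{2}\left(|p_{v}|^{2}-\alpha_{v}e^{2f_{v}}+1\right)\right)
\]
defines $\xi:V(S)\to\mathbb{R}^{4}$ with $\xi_{v}^{4}-\xi_{v}^{3}=e^{f_{v}}>0$, so $\xi_{v}\in\mathbb{R}_{\bot}^{4}$; and $p(\xi_{v})=p_{v}$, $\xi_{v}\ast\xi_{v}=\alpha_{v}$, $\xi_{v}\ast\xi_{w}=\eta_{vw}$ follow from the verification in that proposition (up to matching the sign bookkeeping for $\eta$ used there). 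I expect the only real obstacle to be the middle step: carefully justifying that an arbitrary closed face-path reduces to a product of conjugated elementary vertex-loops and that the holonomy of an elementary loop is exactly the rotation by $K_{v}$. The triangle inequalities are packaged into $f$ being a label, the local development is elementary, and the lift is a direct citation of Proposition \ref{Prop:conf and mweighted}, so no other step should require real work.
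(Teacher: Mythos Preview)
Your proposal is correct and follows essentially the same approach as the paper's proof sketch: place an initial face, develop along chains of adjacent faces, use the vanishing of $K_{v}$ at interior vertices together with the simple connectivity of $S$ to show path-independence (the paper cites \cite[Theorem 5.4]{Ste} for the chain argument), and then appeal to Proposition \ref{Prop:conf and mweighted} for the lift to $\mathbb{R}_{\bot}^{4}$. Your write-up is in fact more detailed than the paper's sketch, making the holonomy computation explicit; the sign bookkeeping for $\eta$ that you flag is an internal inconsistency in the paper itself, not a defect of your argument.
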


\begin{proof}
[Proof (sketch)]We first place a face. This is unique up to Euclidean motion.
We can now successively place neighboring faces in a uniquely determined
place. We know that each vertex is placed since $S$ is connected. One can now
develop along chains (as in \cite[Theorem 5.4]{Ste}), and use the curvature is
zero condition to show it is independent of the chain. The placement is unique
once the first triangle is placed, and so is unique up to Euclidean isometry.
The last statement follows from Proposition \ref{Prop:conf and mweighted}.
\end{proof}

\begin{definition}
If $f\in C_{\alpha,\eta}$ is a label such that $K_{v}=0$ for all $v\in
V\left(  \mathring{S}\right)  $ we call $f$ a \emph{flat label}.
\end{definition}

Using the monodromy theorem, we can associate flat labels with triangulations
of points in the plane. Together with Proposition
\ref{Prop:conf and mweighted}, we get the following.

\begin{corollary}
There is a one-to-one correspondence between the set of flat labels in
$C_{\alpha,\eta}$ for a combinatorial closed disk $S$ and the equivalence
class of M-weighted triangulations $\left(  S,\xi\right)  $ where $\xi$ is
considered up to Euclidean transformations (c.f., Proposition
\ref{prop: groups}).
\end{corollary}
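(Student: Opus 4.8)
The plan is to build the correspondence as a round trip through Theorem \ref{monodromy theorem 1} and Proposition \ref{Prop:conf and mweighted}, and then to check that it descends to the two equivalence relations. Throughout I read ``M-weighted triangulation $\left(S,\xi\right)$'' as shorthand for one that induces the fixed conformal structure $C_{\alpha,\eta}$ in the sense of Proposition \ref{Prop:conf and mweighted}.

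First I would send a flat label to an M-weighted triangulation. Given a flat label $f$, Theorem \ref{monodromy theorem 1} supplies planar points $p:V\left(S\right)\to\mathbb{R}^{2}$, unique up to Euclidean isometry, with $\ell_{vv'}=\left\vert p\left(v\right)-p\left(v'\right)\right\vert$, together with a map $\xi:V\left(S\right)\to\mathbb{R}_{\bot}^{4}$ with $p\left(\xi\right)=p$ realizing the conformal data, i.e.\ an M-weighted triangulation $\left(S,\xi\right)$. For a fixed planar point set $\left\{p_{v}\right\}$ this $\xi$ is the unique one compatible with $C_{\alpha,\eta}$, by Proposition \ref{Prop:conf and mweighted}, so the only ambiguity comes from the Euclidean isometry acting on $\left\{p_{v}\right\}$; by Proposition \ref{prop: groups} such an isometry is realized by a Mobius transformation fixing every weight $W\left(\xi_{v}\right)$, so the class of $\left(S,\xi\right)$ under Euclidean transformations depends only on $f$. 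This defines a map from flat labels to equivalence classes of M-weighted triangulations.

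Next I would send an M-weighted triangulation back to a flat label. Given $\left(S,\xi\right)$, set $p_{v}=p\left(\xi_{v}\right)$; Proposition \ref{Prop:conf and mweighted} returns a label $f\in C_{\alpha,\eta}$ with $\ell_{vw}=\left\vert p_{v}-p_{w}\right\vert$. This $f$ is flat: around each $v\in V\left(\mathring{S}\right)$ the incident faces are realized by honest Euclidean triangles on the points $\left\{p_{w}\right\}$ filling a disk neighborhood, so their angles at $v$ sum to $2\pi$ and $K_{v}=0$. A Euclidean transformation of $\xi$ moves the $p_{v}$ by a Euclidean isometry and leaves every edge length, hence $f$, unchanged, so this map also descends to equivalence classes. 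Finally I would check the two maps are mutually inverse: the second followed by the first returns $f$, since the $\xi$ produced in the first step is exactly the M-weighted triangulation that Proposition \ref{Prop:conf and mweighted} assigns to the points $p$ and lengths $\ell$, and the label read off from it is $f$ again; the first followed by the second sends $\left[\left(S,\xi\right)\right]$ to the monodromy realization $\xi'$ of the associated $f$, and since $\xi'$ and $\xi$ both realize the metric $\ell$ as chordal distances in the plane, $p\left(\xi'\right)$ and $p\left(\xi\right)$ are Euclidean-congruent, whence the uniqueness clause of Proposition \ref{Prop:conf and mweighted} forces $\xi'=\xi$ up to the corresponding Euclidean transformation.

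I expect the only point needing genuine care to be the interplay of the two equivalence relations --- reconciling ``$p$ unique up to Euclidean isometry of the plane'' with ``$\xi$ unique up to Euclidean transformations of M-weighted points.'' This is precisely what Proposition \ref{prop: groups} is for: every planar isometry lifts to a Mobius transformation preserving all the weights $W\left(\xi_{v}\right)$, and conversely such transformations act on the plane by isometries. Once that is in hand the argument is pure bookkeeping with no analytic content, since the substantive work --- developing the triangulation, the uniqueness statements, and the dictionary between M-weighted data and $\left(\alpha,\eta\right)$ --- has already been carried out in Theorem \ref{monodromy theorem 1} and Propositions \ref{prop: groups} and \ref{Prop:conf and mweighted}.
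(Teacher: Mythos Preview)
Your proposal is correct and follows exactly the approach the paper intends: the corollary is stated without proof, as an immediate consequence of Theorem \ref{monodromy theorem 1} together with Proposition \ref{Prop:conf and mweighted}, and you have simply spelled out the two directions of that dictionary and checked they descend to the stated equivalence classes via Proposition \ref{prop: groups}. The only place you add anything beyond the paper is in making explicit why the label read off an M-weighted triangulation is flat (angles around an interior vertex of a planar triangulation sum to $2\pi$) and why the round trips are the identity, both of which are exactly the bookkeeping the paper leaves to the reader.
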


It follows that Problem 1 can almost be reformulated into a problem of finding
a flat label in a conformal class. However, we do not yet know how to describe
the boundary condition. The most natural boundary condition for such a problem
is to specify the weights $f$ on the boundary (Dirichlet type condition) or
specify the angle sum at the boundary (Neumann type boundary condition) as
described in \cite{BPS}. One might think that by considering the disk $U$ as
an additional weighted point, one can consider this more like an equation on a
closed manifold, and this is essentially what we will do. We will need to
describe how to add the additional point as an augmentation.

\begin{definition}
An \emph{augmented conformal combinatorial closed disk} $\hat{S}$ is a
simplicial complex determined by the following properties:

\begin{itemize}
\item There is a conformal combinatorially closed disk $S\subset\hat{S}.$

\item There is a vertex $\hat{v}\in\hat{S}\setminus S$ such that $V\left(
\hat{S}\right)  =V\left(  S\right)  \cup\left\{  \hat{v}\right\}  .$

\item For each $v\in\partial S,$ there is an edge $\hat{e}=v\hat{v};$
furthermore, we have $E\left(  \hat{S}\right)  =E\left(  S\right)
\cup\left\{  \hat{e}=v\hat{v}:v\in\partial S\right\}  .$

\item $\hat{S}$ has a discrete conformal structure and a label in that
conformal structure.
\end{itemize}
\end{definition}

Note the following.

\begin{proposition}
The vertices $V\left(  \hat{S}\right)  $ are partitioned into $V\left(
\mathring{S}\right)  ,$ $V\left(  \partial S\right)  ,$ and $\left\{  \hat
{v}\right\}  .$ The faces $F\left(  \hat{S}\right)  $ are partitioned into
$F\left(  S\right)  $ and $F\left(  \hat{S}\setminus\mathring{S}\right)  .$
\end{proposition}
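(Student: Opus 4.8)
The plan is purely combinatorial: I would simply unwind the defining properties of $\hat{S}$, since no geometry enters the statement.

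First I would dispose of the vertex claim. By the definition of a combinatorial closed disk, $V(S)$ is partitioned into $V(\mathring{S})$ and $V(\partial S)$. The defining properties of an augmented conformal combinatorial closed disk give $V(\hat{S}) = V(S) \cup \{\hat{v}\}$ with $\hat{v} \in \hat{S}\setminus S$; in particular $\hat{v} \notin V(S)$, so this union is disjoint. Splicing the two decompositions together yields the asserted three-part partition of $V(\hat{S})$.

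Next I would handle the faces. Since $S \subset \hat{S}$ as simplicial complexes, $F(S) \subseteq F(\hat{S})$. The key observation is that $S$ is a \emph{full} subcomplex of $\hat{S}$: the edge set of $\hat{S}$ is $E(S)$ together with the edges $v\hat{v}$ for $v \in V(\partial S)$, so any edge of $\hat{S}$ whose two endpoints both lie in $V(S)$ already belongs to $E(S)$. Hence any $2$-simplex of $\hat{S}$ with all three vertices in $V(S)$ has all of its edges in $E(S)$ and is therefore a face of $S$. Thus $F(S)$ is exactly the set of faces of $\hat{S}$ not incident to $\hat{v}$, and the complementary faces — those incident to $\hat{v}$ — are, by the same description of $E(\hat{S})$, triangles of the form $\hat{v}vw$ with $v,w \in V(\partial S)$. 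These are precisely the faces making up the part of $\hat{S}$ lying outside the interior $\mathring{S}$, i.e.\ $F(\hat{S}\setminus\mathring{S})$. Disjointness is then immediate, since a face either contains $\hat{v}$ or it does not, and the union of the two sets exhausts $F(\hat{S})$.

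The only genuinely delicate point — and the place I would be most careful — is the bookkeeping around the notation $F(\hat{S}\setminus\mathring{S})$: the definition of $\hat{S}$ pins down $V(\hat{S})$ and $E(\hat{S})$ but does not list $F(\hat{S})$ explicitly, so one must check that, with the vertex and edge data fixed, the $2$-simplices split cleanly, and that ``$\hat{S}\setminus\mathring{S}$'' is read so that the faces of $S$ meeting $\partial S$ are still counted in $F(S)$ rather than in $F(\hat{S}\setminus\mathring{S})$. Once that reading is fixed — the faces of $\hat{S}\setminus\mathring{S}$ being exactly the cone on $\partial S$ with apex $\hat{v}$, equivalently the faces of $\hat{S}$ not lying in $F(S)$ — the partition follows with nothing further to prove.
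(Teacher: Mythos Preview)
The paper offers no proof of this proposition: it is introduced with ``Note the following'' and left as an immediate consequence of the definition of the augmented disk. Your careful unpacking is correct and is exactly the kind of verification the paper suppresses. Your caveat about bookkeeping is well placed---the definition of $\hat{S}$ literally pins down only $V(\hat{S})$ and $E(\hat{S})$, so the face set and the meaning of $F(\hat{S}\setminus\mathring{S})$ must be read from context (the cone on $\partial S$ with apex $\hat{v}$ glued to $S$, consistent with the later remark that $\hat{S}$ is topologically a sphere and with how the curvature definition uses these face sets). Once that reading is fixed, your partition-by-incidence-to-$\hat{v}$ argument is precisely what is needed.
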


In general, we will consider the augmentation of a conformal combinatorial
closed disk to an augmented conformal combinatorially closed disk in order to
impose boundary conditions on the determination of a conformal combinatorial
closed disk with certain properties.

Note that $\hat{S}$ is topologically a sphere, but we will not be considering
it in this way, since geometrically it will be far from a sphere. In
particular, the curvature of an augmented conformal combinatorial closed disk
$\hat{S}$ is slightly different than it would be for a sphere:

\begin{definition}
Let $\hat{S}$ be an augmented conformal combinatorial closed disk. The
curvatures $K:V\left(  \hat{S}\right)  \rightarrow\mathbb{R}$ are defined to
be
\[
K\left(  v\right)  =\left\{
\begin{array}
[c]{cl}%
2\pi-\sum_{f\in F\left(  S\right)  }\theta\left(  v<f\right)  & \text{if }v\in
V\left(  \mathring{S}\right)  ,\\
\sum_{f\in F\left(  \hat{S}\setminus\mathring{S}\right)  }\theta\left(
v<f\right)  -\sum_{f\in F\left(  S\right)  }\theta\left(  v<f\right)  &
\text{if }v\in V\left(  \partial S\right)  ,\\
\sum_{f\in F\left(  \hat{S}\right)  }\theta\left(  v<f\right)  -2\pi &
\text{if }v=\hat{v}.
\end{array}
\right.
\]
We say that $\hat{S}$ is flat if $K\left(  v\right)  =0$ for all $v\in
V\left(  \hat{S}\right)  .$ If the geometry comes from a label in a conformal
class, we call such a label a flat label.
\end{definition}

The motivation for this definition, is that we want to consider $\hat{S}$ as
one disk folded on the other (see Section \ref{sect: curvature measures}).
Note that zero curvature around the boundary means that the complex folds over
on itself perfectly. We may prove a restriction on the curvatures.

\begin{proposition}
The curvatures satisfy
\[
\sum_{v\in V\left(  \hat{S}\right)  }K\left(  v\right)  =0.
\]

\end{proposition}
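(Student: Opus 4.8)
The plan is to prove this combinatorial Gauss--Bonnet identity by a direct computation: add together the three cases defining $K$, regroup all of the resulting angle terms face by face, and finish with the Euler-characteristic bookkeeping of a triangulated disk. I would begin by splitting $\sum_{v\in V(\hat S)}K(v)$ according to the partition $V(\hat S)=V(\mathring S)\cup V(\partial S)\cup\{\hat v\}$. The constant terms contribute $2\pi\lvert V(\mathring S)\rvert$ from the interior vertices, nothing from the boundary vertices, and $-2\pi$ from $\hat v$. Every remaining term has the form $\pm\theta(v<f)$ with $f\in F(S)$ or $f\in F(\hat S\setminus\mathring S)$, and I would collect these face by face. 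Each $f\in F(S)$ has all three vertices in $V(S)=V(\mathring S)\cup V(\partial S)$; for each of them the term $-\theta(v<f)$ occurs exactly once (in the interior-vertex formula or in the boundary-vertex formula) and $\hat v\notin f$, so --- since the angles of a Euclidean triangle sum to $\pi$ --- the total contribution of $F(S)$ is $-\pi\lvert F(S)\rvert$. Each $f\in F(\hat S\setminus\mathring S)$ contains $\hat v$ and, besides $\hat v$, only boundary vertices; for each vertex of $f$ the term $+\theta(v<f)$ occurs exactly once (in the boundary-vertex formula, or in the $\hat v$ formula, using that $\hat v$ lies on no face of $F(S)$ so $\sum_{f\in F(\hat S)}\theta(\hat v<f)=\sum_{f\in F(\hat S\setminus\mathring S)}\theta(\hat v<f)$), giving $+\pi\lvert F(\hat S\setminus\mathring S)\rvert$. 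Hence
\[
\sum_{v\in V(\hat S)}K(v)=2\pi\lvert V(\mathring S)\rvert-2\pi-\pi\lvert F(S)\rvert+\pi\lvert F(\hat S\setminus\mathring S)\rvert .
\]

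It then remains to compute the two face counts. For the triangulated closed disk $S$, I would combine $\chi(S)=1$ with the incidence identity $3\lvert F(S)\rvert=2\lvert E(S)\rvert-\lvert V(\partial S)\rvert$ (an interior edge lies on two faces, a boundary edge on one, and $\partial S$ is a cycle with $\lvert V(\partial S)\rvert$ edges) to obtain $\lvert F(S)\rvert=2\lvert V(\mathring S)\rvert+\lvert V(\partial S)\rvert-2$. For the cap, the faces of $F(\hat S\setminus\mathring S)$ are exactly the triangles $\hat v v w$ with $vw$ a boundary edge of $S$, so they are in bijection with $E(\partial S)$, giving $\lvert F(\hat S\setminus\mathring S)\rvert=\lvert V(\partial S)\rvert$ (equivalently, $\hat S\setminus\mathring S$ is itself a triangulated disk with the single interior vertex $\hat v$, and the same identity gives $2\cdot 1+\lvert V(\partial S)\rvert-2$). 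Substituting both counts,
\[
\sum_{v\in V(\hat S)}K(v)=2\pi\lvert V(\mathring S)\rvert-2\pi-\pi\bigl(2\lvert V(\mathring S)\rvert+\lvert V(\partial S)\rvert-2\bigr)+\pi\lvert V(\partial S)\rvert=0 .
\]

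The only step needing genuine care is the face-by-face bookkeeping in the first paragraph: one must check that every angle $\theta(v<f)$ appears exactly once and with the claimed sign, which is precisely where the partitions of $V(\hat S)$ and of $F(\hat S)$ from the preceding proposition are used, together with the fact that $\hat v$ is incident only to faces of $F(\hat S\setminus\mathring S)$ (and, conversely, that every face of $F(\hat S\setminus\mathring S)$ is incident to $\hat v$). Everything else is the standard Euler count for a triangulated surface with boundary, so the identity follows.
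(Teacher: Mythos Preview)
Your proof is correct and follows essentially the same approach as the paper: both sum the angle contributions face by face to reach the intermediate expression $2\pi\lvert V(\mathring S)\rvert-\pi\lvert F(S)\rvert+\pi\lvert F(\hat S\setminus\mathring S)\rvert-2\pi$, then finish using $\lvert F(\hat S\setminus\mathring S)\rvert=\lvert V(\partial S)\rvert$, the Euler characteristic $\chi(S)=1$, and the incidence relation $3\lvert F(S)\rvert=2\lvert E(S)\rvert-\lvert E(\partial S)\rvert$. The only difference is cosmetic: you first solve for $\lvert F(S)\rvert$ in terms of vertex counts and substitute, whereas the paper carries the computation through as a chain of equalities ending in $2\pi\chi(S)+2\pi\lvert E(S)\rvert-3\pi\lvert F(S)\rvert-\pi\lvert E(\partial S)\rvert-2\pi=0$.
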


\begin{proof}
This follows from the fact that the sum is equal to
\begin{align*}
\sum_{v\in V\left(  \hat{S}\right)  }K\left(  v\right)   &  =2\pi\left\vert
V\left(  \mathring{S}\right)  \right\vert -\pi\left\vert F\left(  S\right)
\right\vert +\pi\left\vert F\left(  \hat{S}\setminus\mathring{S}\right)
\right\vert -2\pi\\
&  =2\pi\left\vert V\left(  \mathring{S}\right)  \right\vert -\pi\left\vert
F\left(  S\right)  \right\vert +\pi\left\vert V\left(  \partial S\right)
\right\vert -2\pi\\
&  =2\pi\left\vert V\left(  S\right)  \right\vert -\pi\left\vert F\left(
S\right)  \right\vert -\pi\left\vert E\left(  \partial S\right)  \right\vert
-2\pi\\
&  =2\pi\chi\left(  S\right)  +2\pi\left\vert E\left(  S\right)  \right\vert
-3\pi\left\vert F\left(  S\right)  \right\vert -\pi\left\vert E\left(
\partial S\right)  \right\vert -2\pi\\
&  =0,
\end{align*}
since $3F\left(  S\right)  =2E\left(  S\right)  -E\left(  \partial S\right)  $
and $\chi\left(  S\right)  =1$.
\end{proof}

\begin{remark}
If instead of this definition, one only used the usual curvature as angle
deficit (first part of the curvature definition above) for every vertex, one
would get $\sum_{v\in V\left(  \hat{S}\right)  }K\left(  v\right)  =4\pi,$
following from the Euler characteristic of the sphere.
\end{remark}

We will give some basic examples:

\begin{example}
[Circle packing with internally tangent boundary]Circle packing with
internally tangent boundary. A circle packing arises from $\alpha_{v}=1$ and
$\eta_{e}=1$ for vertices and edges in the disk. In order to get boundary
circles to be internally tangent to a circle, we specify that $\alpha_{\hat
{v}}=1$ for the augmented vertex and $\eta_{e}=-1$ for any augmented edges.
This assures that $\ell_{v\hat{v}}=e^{f_{\hat{v}}}-e^{f_{v}}$ as long as
$f_{\hat{v}}\geq f_{v}.$
\end{example}

\begin{example}
[Circle packing orthogonal to boundary]In this case, we keep the circle
packing for the disk, but must specify that $\alpha_{\hat{v}}=1$ and $\eta
_{e}=0$ for any augmented edges.
\end{example}

\begin{example}
[Inscribed triangulation with multiplicative weights]Multiplicative weights
arise when $\alpha_{v}=0$ (and $\eta_{e}$ are all fixed to positive numbers).
If we specify this for all vertices of the disk, we can set $\alpha_{\hat{v}%
}=1$ (or any other positive number) and specify that $\eta_{e}=0$ for all
augmented edges. This assures that for all augmented edges, $\ell_{e}=1,$ and
so the boundary vertices lie on the unit circle. This is particularly nice
since if one can find a zero curvature solution, the triangulated disk is
inscribed in the unit circle. In this setting, one needs to make sure that the
triangle inequality is satisfied for any given choice of $\eta$'s and $f$'s.
\end{example}

We may now extend the monodromy theorem (Theorem \ref{monodromy theorem 1}) to
augmented disks.

\begin{theorem}
[Monodromy]\label{Monodromy Theorem 2}If $\vec{f}$ is a flat label, there
exist points $P:V\left(  \hat{S}\right)  \rightarrow\mathbb{R}^{2}$ such that
$\ell_{vv^{\prime}}=\left\vert P\left(  v\right)  -P\left(  v^{\prime}\right)
\right\vert $ for any edge $e=vv^{\prime}.$ The map $P$ is unique up to
Euclidean isometry.
\end{theorem}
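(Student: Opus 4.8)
The plan is to reduce to the already-proven monodromy theorem for ordinary combinatorial closed disks (Theorem \ref{monodromy theorem 1}) by observing that the flatness condition on $\hat S$ forces the two pieces $F(S)$ and $F(\hat S\setminus\mathring S)$ to develop consistently. First I would develop the subcomplex $S$ alone. Since $\vec f$ restricted to $S$ is a label with $K_v=2\pi-\sum_{f\in F(S)}\theta(v<f)=0$ for every $v\in V(\mathring S)$, Theorem \ref{monodromy theorem 1} applies directly and produces points $P:V(S)\to\mathbb R^2$, unique up to Euclidean isometry, with $\ell_{vv'}=|P(v)-P(v')|$ for every edge of $S$. Fix one such $P$.

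Next I would place the augmented vertex $\hat v$. The faces in $F(\hat S\setminus\mathring S)$ form a ``fan'' around $\hat v$: each such face has $\hat v$ as a vertex and an edge of $\partial S$ as its opposite side, and consecutive faces share an edge $v\hat v$. So developing this fan amounts to laying down the triangles $v_k v_{k+1}\hat v$ (in cyclic order around $\partial S$) one after another, each glued along the previously placed edge $v_k\hat v$; the only possible inconsistency is an angle discrepancy when we return to the starting edge after going once around $\hat v$. That total discrepancy is exactly $2\pi - \sum_{f\in F(\hat S)}\theta(\hat v<f)$, which vanishes because $K(\hat v)=0$. Hence the fan develops consistently and assigns a well-defined position to $\hat v$ (relative to the already-placed edge $v_0\hat v$), with all edge lengths $\ell_{v\hat v}$ and $\ell_{v_kv_{k+1}}$ realized.

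The remaining point is to check that the two developments are compatible: I have developed $S$ using Theorem \ref{monodromy theorem 1}, and separately developed the fan of faces around $\hat v$; I must verify these agree on the shared data, namely the positions of the boundary vertices $V(\partial S)$ and the lengths $\ell_{v_kv_{k+1}}$ of the boundary edges. Since both developments realize the same discrete metric on those edges, and the cyclic order of boundary vertices is the same, the fan can be attached to the developed $S$ along $\partial S$; the condition $K(v)=\sum_{f\in F(\hat S\setminus\mathring S)}\theta(v<f)-\sum_{f\in F(S)}\theta(v<f)=0$ for $v\in V(\partial S)$ says precisely that the angle the fan occupies at each boundary vertex equals the angle $S$ occupies there, so the fan folds back exactly over $S$ without overlap obstruction to the \emph{edge-length} realization (it will generally overlap as a map to $\mathbb R^2$, which is expected — $\hat S$ is ``one disk folded on the other''). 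Gluing the positions of $V(S)$ from the first development with the position of $\hat v$ from the second yields $P:V(\hat S)\to\mathbb R^2$ with $\ell_{vv'}=|P(v)-P(v')|$ for every edge of $\hat S$. Uniqueness up to Euclidean isometry follows as in Theorem \ref{monodromy theorem 1}: once the first face is placed, every subsequent face placement is forced, and the two flatness conditions guarantee no contradiction arises, so the only freedom is the initial rigid motion.

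The step I expect to be the main obstacle is the last one — making precise that the fan around $\hat v$ and the developed disk $S$ can be simultaneously realized, i.e., that choosing the rigid motion aligning the fan's copy of an edge $v_0 v_1$ with $S$'s copy of that edge produces a globally consistent $P$ on all of $V(\hat S)$. The cleanest way to handle this is probably to develop $\hat S$ directly as a single complex via developing maps along edge-paths (as in \cite[Theorem 5.4]{Ste}), checking path-independence by verifying the holonomy around each vertex link is trivial: around interior vertices of $S$ and around $\hat v$ this is the corresponding $K=0$ condition, and around boundary vertices it is the boundary $K=0$ condition; since $\hat S$ is simply connected as a CW-complex relative to vertex links... (actually $\hat S$ is a sphere, hence simply connected), trivial holonomy around all vertex stars suffices for path-independence, giving $P$ directly.
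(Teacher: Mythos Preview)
Your proposal is correct and uses the same ingredients as the paper's proof; the only difference is the order of development. The paper starts at $\hat v$, lays down the fan of augmented faces first (using $K(\hat v)=0$), and then places the faces of $S$ by folding back from the already-placed boundary edges, invoking $K(v)=0$ at each $v\in V(\partial S)$ to get consistency there before continuing inward via the chain argument from \cite[Theorem~5.4]{Ste}. You instead develop $S$ first via Theorem~\ref{monodromy theorem 1} and then the fan, which means your gluing step must match two independently produced copies of the boundary polygon; this works because a planar polygonal path is rigid once its side lengths and turning angles are fixed, and $K(v)=0$ on $\partial S$ gives exactly the equality of turning angles---you gesture at this but should state it explicitly. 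Your final paragraph, developing $\hat S$ directly along chains and checking trivial holonomy at every vertex star, is precisely what the paper does.
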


\begin{proof}
[Proof (sketch)]In our setting, we first place $\hat{v}$ at the origin, i.e.,
$P\left(  \hat{v}\right)  =0.$. Then place a face incident on $\hat{v},$
giving placement for the other two vertices. This is unique up to rotation.
Now one can place the other faces around the vertex $P\left(  \hat{v}\right)
=0.$ Since the complex looks like a disk around $\hat{v}$ and since the
curvature at $\hat{v}$ is zero, this will give a consistent choice of
$P\left(  v\right)  $ for all $v\in V\left(  \partial S\right)  .$ Now, given
an edge in $\partial S,$ we can now place the triangles with these edges, and
we do this so that the triangles are folded back toward the center of the
disk. We can place triangles in order around the vertex $v\in V\left(
\partial S\right)  ,$ and since the curvature $K\left(  v\right)  =0,$ we must
have that the faces placed around the vertex give a consistent value for $P.$
Now we can continue to place triangles inside, this time not folding back.

We know that each vertex is placed since $S$ is connected. One can now develop
along chains (as in \cite[Theorem 5.4]{Ste}), and use the curvature is zero
condition to show it is independent of the chain. The placement is unique up
to the placement of $\hat{v}$ and rotation, which is clearly the same thing as
being unique up to Euclidean isometry.
\end{proof}

This allows us to reformulate Problem \ref{packing problem 1} into the
following equivalent problem.

\begin{problem}
\label{packing problem 2}Let $S$ be a combinatorial closed disk and let
$C_{\alpha,\eta}$ be a conformal structure and $\mu:V\left(  \partial
S\right)  \rightarrow\mathbb{R}$. Let $\hat{\alpha}_{v}=\alpha_{v}$ if $v\in
V\left(  S\right)  $ and $\hat{\alpha}_{\hat{v}}=1.$ Let $\hat{\eta}_{e}%
=\eta_{e}$ if $e\in E\left(  S\right)  $ and $\hat{\eta}_{\hat{v}v}=\mu_{v}$
for all $v\in\partial S.$ Then find a flat label $f\in C_{\hat{\alpha}%
,\hat{\eta}}\left(  \hat{S}\right)  $.
\end{problem}

We have shown the following.

\begin{theorem}
Problem \ref{packing problem 1} and Problem \ref{packing problem 2} are equivalent.
\end{theorem}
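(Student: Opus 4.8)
The plan is to establish the equivalence by exhibiting an explicit correspondence between solutions of the two problems and checking that it is a bijection, using the two monodromy theorems as the main bridge. I would begin in the direction ``Problem~\ref{packing problem 2} solved $\Rightarrow$ Problem~\ref{packing problem 1} solved.'' Suppose $f\in C_{\hat\alpha,\hat\eta}(\hat S)$ is a flat label. By Theorem~\ref{Monodromy Theorem 2} there are points $P:V(\hat S)\to\mathbb R^2$ realizing all edge lengths, unique up to Euclidean isometry; normalize so that $P(\hat v)=0$. By the last sentence of Proposition~\ref{Prop:conf and mweighted} (applied vertex by vertex, or its augmented analog), these points together with $\hat\alpha,\hat\eta$ determine an M-weighted triangulation $\hat\xi:V(\hat S)\to\mathbb R_\bot^4$ with $\hat\xi_v\ast\hat\xi_v=\hat\alpha_v$ and $\hat\xi_v\ast\hat\xi_w=-\hat\eta_{vw}$ for all edges. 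Restricting $\hat\xi$ to $V(S)$ gives a candidate $\xi$ for Problem~\ref{packing problem 1}; the two ``$\ast$'' equations there hold by construction (noting the sign convention $\eta_{vw}=-\xi_v\ast\xi_w$ matches $\hat\eta_e=\eta_e$). The one thing left to verify is the boundary condition $\xi_v\ast U=(\xi_v^4-\xi_v^3)\mu_v$ for $v\in V(\partial S)$; here I would use that $\hat\xi_{\hat v}$ represents a point at the origin, i.e.\ $\hat\xi_{\hat v}$ is proportional to $(0,0,0,1)$-type data with $\hat\xi_{\hat v}\ast\hat\xi_{\hat v}=1$, identify it (up to scaling) with $U=(0,0,1,0)$ via a Mobius/linear transformation preserving all Minkowski products (Proposition~\ref{prop: groups}), and then read off $\hat\xi_{\hat v}\ast\hat\xi_v = -\hat\eta_{\hat v v} = -\mu_v$ and convert this into the stated normalization $\xi_v\ast U=(\xi_v^4-\xi_v^3)\mu_v$ by tracking the scaling factors $\xi_v^4-\xi_v^3$ that relate a general M-weighted representative to the one on $U$.

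For the reverse direction, given $\xi:V(S)\to\mathbb R_\bot^4$ solving Problem~\ref{packing problem 1}, I would first apply a Mobius transformation (Proposition~\ref{prop: groups}) so that the unit disk $U=(0,0,1,0)$ plays the role of $\hat v$; set $\hat\xi_v=\xi_v$ for $v\in V(S)$ and $\hat\xi_{\hat v}=U$. This defines an M-weighted triangulation of $\hat S$ with the correct conformal data $\hat\alpha,\hat\eta$: the interior and boundary-of-$S$ relations are inherited from $\xi$, while the augmented edges satisfy $-\hat\xi_{\hat v}\ast\hat\xi_v = -U\ast\xi_v = -(\xi_v^4-\xi_v^3)\mu_v$, which I want to be $\hat\eta_{\hat v v}=\mu_v$ after the appropriate Euclidean normalization (choosing representatives with $\xi_v^4-\xi_v^3=e^{-f_v}$ as in the proof of Proposition~\ref{Prop:conf and mweighted}). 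By the Corollary relating flat labels to M-weighted triangulations up to Euclidean transformation, this M-weighted triangulation corresponds to a label $f\in C_{\hat\alpha,\hat\eta}(\hat S)$; and because the triangulation is geometrically realized in the plane by the points $p(\hat\xi_v)$, all the curvatures $K(v)$ in the augmented sense vanish — the interior angle deficits vanish because the realization is flat, and the folded boundary/$\hat v$ curvatures vanish because the complex is realized as a genuine planar (doubled) configuration. Hence $f$ is a flat label and solves Problem~\ref{packing problem 2}.

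Finally I would check that these two assignments are mutually inverse, so that the correspondence is a genuine bijection and the problems are ``equivalent'' in the strong sense (not merely solvable-together). This is essentially bookkeeping: the Monodromy Theorems guarantee uniqueness up to Euclidean isometry, and the Corollary says flat labels and M-weighted triangulations up to Euclidean transformation are in one-to-one correspondence, so composing the two constructions returns the original data after quotienting by the Euclidean group — which is exactly the ambiguity already present in ``a flat label $f$'' versus its geometric realization.

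The step I expect to be the main obstacle is the careful translation of the boundary condition: matching the inhomogeneous normalization $\xi_v\ast U=(\xi_v^4-\xi_v^3)\mu_v$ in Problem~\ref{packing problem 1} with the homogeneous edge relation $\hat\xi_{\hat v}\ast\hat\xi_v=-\mu_v$ coming from the conformal structure $\hat\eta_{\hat v v}=\mu_v$ in Problem~\ref{packing problem 2}. This requires being precise about which lift of $U$ into $\mathbb R_\bot^4$ one uses, how the Mobius normalization interacts with the Euclidean subgroup, and how the factor $\xi_v^4-\xi_v^3 = e^{-f_v}$ enters — in other words, confirming that the definition of ``flat'' for $\hat S$ (with its folded-boundary curvature convention) is exactly calibrated so that a flat label on $\hat S$ produces an honest planar doubled configuration with the augmented vertex sitting at the center $U$. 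Everything else follows formally from Theorem~\ref{Monodromy Theorem 2}, Proposition~\ref{Prop:conf and mweighted}, Proposition~\ref{prop: groups}, and the Corollary.
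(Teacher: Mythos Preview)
Your proposal is correct and follows essentially the same route the paper takes: the theorem in the paper is stated as a summary (``We have shown the following'') of the preceding development, namely Theorem~\ref{Monodromy Theorem 2}, Proposition~\ref{Prop:conf and mweighted}, and the Corollary relating flat labels to M-weighted triangulations, and your argument is a careful explicit unpacking of exactly that chain. Your identification of the boundary-condition bookkeeping (matching $\xi_v\ast U=(\xi_v^4-\xi_v^3)\mu_v$ with $\hat\eta_{\hat v v}=\mu_v$ via the lift of the augmented vertex and the factor $e^{-f_v}=\xi_v^4-\xi_v^3$) is the only place requiring genuine care, and you have located it correctly; note in passing that the paper's Proposition~\ref{Prop:conf and mweighted} uses $\eta_{vw}=-\xi_v\ast\xi_w$ while Problem~\ref{packing problem 1} writes $\xi_v\ast\xi_w=\eta_{vw}$, so a consistent sign convention must be fixed when you carry out that step.
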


Problems \ref{packing problem 1} and \ref{packing problem 2} are the direct
generalizations of the problems described in \cite{BowSte1}. In the latter,
the formulation is done in hyperbolic geometry instead of Euclidean, which has
the advantage of allowing the assignment of curvatures (generalizing radii) of
generalized circles (called cycles there) and formulating the problem as a
boundary value problem where the boundary data is specified (a Dirichlet type
problem). However, the case of $\alpha_{v}=0$ does not appear to exist in that
context. In addition, this formulation gives an alternative that uses
primarily Euclidean instead of hyperbolic geometry. The formulation in
\cite{BowSte1} with the restrictions there allow one to prove existence and
uniqueness (see also \cite{MR} and \cite{RHD}), whereas the more general case
of Problems \ref{packing problem 1} and \ref{packing problem 2} remain open.

\section{The action of the Mobius group}

Recall that Mobius transformations act on $\left(  \hat{S},\xi\right)  ,$ a
M-weighted triangulation of the augmented disk. Since, by Proposition
\ref{Prop:conf and mweighted} there is a correspondence with flat labels $f\in
C_{\hat{\alpha},\hat{\eta}},$ we see that Mobius transformations must act on
the flat labels. In particular, taking a one parameter family of Mobius
transformations through the identity, we get a deformation of $f$ through flat
labels, so it is not possible that the solution to Problems
\ref{packing problem 1} and \ref{packing problem 2} is unique. However, we
wish to show that these are the only deformations.

A first question is what are the possible deformations through flat labels.

\begin{proposition}
\label{prop: mobius invariance}The infinitesimal Mobius transformations induce
the following variations of flat labels: Suppose $p_{i}$ are the points
representing the vertices. Then the variations are all of the form%
\[
\delta f_{i}=2\left(  a,b\right)  \cdot p_{i}+c,
\]
where $\left(  a,b\right)  \in\mathbb{R}^{2}$ and $c\in\mathbb{R}$.
\end{proposition}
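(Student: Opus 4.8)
The plan is to identify the Mobius group action with a concrete family of linear maps on $\mathbb{R}^4$, differentiate at the identity, and read off what these do to the labels $f_v = -\log(\xi_v^4 - \xi_v^3)$. By Proposition \ref{prop: groups}, a Mobius transformation acts on an M-weighted triangulation by a linear map $L$ on $\mathbb{R}^4$ preserving the Minkowski product, i.e.\ $L \in O(3,1)$. Since the conformal structure constraints $\xi_v \ast \xi_v = \alpha_v$ and $\xi_v \ast \xi_w = \eta_{vw}$ are preserved by such $L$, and (by Theorem \ref{Monodromy Theorem 2} together with Proposition \ref{Prop:conf and mweighted}) flat labels correspond bijectively to M-weighted triangulations of the plane up to Euclidean isometry, applying $L$ carries flat labels to flat labels. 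So the task reduces to: compute $\delta f_v$ when $\xi_v$ is varied by $\delta \xi_v = X \xi_v$ for $X$ in the Lie algebra $\mathfrak{o}(3,1)$.

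The key computation is the following. Write $\xi_v = (\xi_v^1, \xi_v^2, \xi_v^3, \xi_v^4)$ with the normalization $\xi_v^4 - \xi_v^3 = e^{-f_v}$, and recall $p_v = (p_v^1, p_v^2) = e^{f_v}(\xi_v^1, \xi_v^2)$. From $f_v = -\log(\xi_v^4 - \xi_v^3)$ we get
\[
\delta f_v = -\frac{\delta \xi_v^4 - \delta \xi_v^3}{\xi_v^4 - \xi_v^3} = -e^{f_v}\left( (X\xi_v)^4 - (X\xi_v)^3 \right).
\]
Now I would write a general element $X \in \mathfrak{o}(3,1)$ in block form relative to the splitting $\mathbb{R}^4 = \mathbb{R}^2 \oplus \mathbb{R}\langle e_3 \rangle \oplus \mathbb{R}\langle e_4 \rangle$, or more efficiently use the light-cone coordinates $u = \xi^4 - \xi^3$, $s = \xi^4 + \xi^3$ in which the Minkowski form becomes $(\xi^1)^2 + (\xi^2)^2 - us$. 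In these coordinates the infinitesimal isometries are spanned by: rotations in the $(\xi^1,\xi^2)$-plane; the "dilation" $\xi^1 \mapsto \xi^1, \xi^2 \mapsto \xi^2, u \mapsto \lambda u, s \mapsto \lambda^{-1} s$ (infinitesimally $\delta u = u$); two "translation" generators $\delta \xi^i = t^i u$, $\delta s = 2 t^i \xi^i$ with $\delta u = 0$; and two "special" generators with $\delta u = 2 b^i \xi^i$, $\delta \xi^i = b^i s$. Plugging each into the formula $\delta f_v = -e^{f_v}\,\delta u_v$ gives, respectively, $0$, the constant $-1$ (a scalar), $0$ again, and $-e^{f_v}\cdot 2 b^i \xi_v^i = -2(b^1, b^2)\cdot p_v$. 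So the span of all these variations is exactly $\{\, \delta f_i = 2(a,b)\cdot p_i + c \,\}$, matching the claim (the sign of $(a,b)$ being absorbed into the parametrization). The Euclidean isometries among these (rotations and translations) act trivially on $f$, consistent with $f$ being defined only up to Euclidean motion.

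The main obstacle — really a bookkeeping point rather than a deep one — is being careful that the correspondence between Mobius maps and linear maps $L$ is only well-defined on $\xi$ up to the Euclidean-isometry ambiguity in the monodromy theorem, so one must check that the induced variation on $f$ is genuinely well-defined: two lifts of the same Mobius transformation differ by a Euclidean isometry, which by the above computation contributes nothing to $\delta f$, so the variation descends unambiguously. A secondary point is dimension counting: $O(3,1)$ is $6$-dimensional, the Euclidean isometry subgroup is $3$-dimensional, so the space of genuine variations is $3$-dimensional, exactly matching the parameters $(a,b,c) \in \mathbb{R}^3$; this confirms we have found all of them and not merely a sub-family. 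I would close by noting that the affineness of $\delta f_i$ in $p_i$ is what one expects, since these are precisely the variations that, after exponentiating, rescale edge lengths in the pattern induced by a Mobius transformation of the ambient plane/sphere.
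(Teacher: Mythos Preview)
Your proposal is correct and follows essentially the same approach as the paper: both compute the infinitesimal action of $O(3,1)$ on $\xi^4-\xi^3=e^{-f}$ and read off $\delta f$. The only difference is cosmetic---the paper writes out a generic matrix $I+\varepsilon M$ in the standard coordinates and imposes the $O(3,1)$ condition directly, whereas you pass to light-cone coordinates and use the standard basis of the conformal Lie algebra; your added remarks on well-definedness modulo Euclidean isometry and the dimension count are not in the paper's proof but are welcome clarifications.
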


The proof is left for the Appendix. Note that by the work in \cite{G4}, the
derivative of the curvature map $f\rightarrow K$ takes the form
\[
\frac{dK_{v}}{dt}=\triangle\frac{df_{v}}{dt}=\sum_{vw\in E\left(  T\right)
}\frac{\ell_{vw}^{\ast}}{\ell_{vw}}\left(  \frac{df_{w}}{dt}-\frac{df_{v}}%
{dt}\right)
\]
where $\ell_{vw}^{\ast}$ is the dual length of the edge $vw$ as determined by
the discrete conformal structure. Thus the Mobius transformations induce a
kernel of the Laplacian operator $\triangle$. It is interesting to observe
that the kernel appears to correspond to linear functions analogously to the
kernel of the smooth Euclidean Laplacian. We can see this directly as follows.
Consider a vertex $p_{0},$ with edges $p_{1},\ldots,p_{k}$ adjacent to it. For
each edge $p_{0}p_{i},$ there is a dual edge induced by the conformal
structure, and all the edges around the vertex correspond to a cycle. Notice
that if $R$ denotes rotation by $\pi/2$, $\frac{\ell_{p_{i}p_{0}}^{\ast}}%
{\ell_{p_{i}p_{0}}}R\left(  p_{i}-p_{0}\right)  $ is a vector of length
$\ell_{p_{i}p_{0}}^{\ast}$ in the direction of the dual edge. This sum is then
zero because of it is the sum of vectors on a cycle. Thus,
\[
0=\triangle Rp_{i}=R\triangle p_{i}.
\]

\section{Open problems}

\subsection{Uniqueness}

We have already shown that the solutions to Problems \ref{packing problem 1}
and \ref{packing problem 2} are invariant under Mobius transformations. We
conjecture that these are the only such invariant deformations.

\begin{conjecture}
Let $f\in C_{\hat{\alpha},\hat{\eta}}$ be a flat label. Then the only
deformations of $f$ through flat labels correspond to Mobius transformations.
\end{conjecture}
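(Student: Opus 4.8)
The plan is to reduce the conjecture to a statement about the kernel of the differential of the augmented curvature map, and then to a Cauchy-type infinitesimal rigidity statement for M-weighted triangulations. First I would compute the differential of the map $f\mapsto\left(K(v)\right)_{v\in V(\hat{S})}$ for the augmented disk. Using the multiplicity description of the augmented curvature, $K(v)=2\pi\mu(v)+\sum_{e>v}\pi\,\mu(e)+\sum_{f>v}\left(\pi-\theta_{v<f}\right)\mu(f)$, one gets $\partial K(v)/\partial f_w=-\sum_{f>vw}\mu(f)\,\partial\theta_{v<f}/\partial f_w$, and the single-face variation formulas for the conformal structure (as used in \cite{G4}) should assemble this into a symmetric operator
\[
\hat{\triangle}_{vw}=\frac{\ell^{\ast}_{vw}}{\ell_{vw}},\qquad \hat{\triangle}_{vv}=-\sum_{w}\hat{\triangle}_{vw},
\]
where now the dual length $\ell^{\ast}_{vw}$ is the one determined by the conformal structure together with the face multiplicities of the augmented disk, so that the contributions of the folded-over band $F(\hat{S}\setminus\mathring{S})$ enter with the opposite sign. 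In particular $\hat{\triangle}$ is symmetric and has vanishing row sums, and by Proposition \ref{prop: mobius invariance} the three-dimensional space $\{\delta f_i=2(a,b)\cdot p_i+c\}$ lies in $\ker\hat{\triangle}$.

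With this in hand the conjecture is equivalent to the assertion that $\dim\ker\hat{\triangle}=3$, i.e.\ that $\hat{\triangle}$ has maximal rank $\left\vert V(\hat{S})\right\vert-3$. I would then translate this into geometry via the monodromy theorem (Theorem \ref{Monodromy Theorem 2}) together with Propositions \ref{Prop:conf and mweighted} and \ref{prop: groups}: a curve of flat labels in $C_{\hat{\alpha},\hat{\eta}}$ is the same as a curve of M-weighted triangulations $(\hat{S},\xi)$, up to Euclidean motion, with $\xi_v\ast\xi_v=\hat{\alpha}_v$ and $\xi_v\ast\xi_w=\hat{\eta}_{vw}$ fixed on all edges and remaining a bona fide flat planar triangulation; and the deformations that additionally preserve every pairwise Minkowski product $\xi_v\ast\xi_w$ are exactly the infinitesimal Mobius transformations (Proposition \ref{prop: groups}). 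So the conjecture becomes a Cauchy-type rigidity statement: an infinitesimal deformation of a flat M-weighted triangulation of $\hat{S}$ that preserves the Minkowski edge data $\xi_v\ast\xi_v,\ \xi_v\ast\xi_w$ on edges must preserve all Minkowski products, hence come from $O(3,1)$.

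To prove the rigidity I would look for a discrete maximum principle or Bochner-type identity for $\hat{\triangle}$, modeled on the classical fact that a connected graph Laplacian with positive edge weights has only constants in its kernel. The step I expect to be the main obstacle is precisely that $\hat{\triangle}$ need not have positive weights: on the interior, $\ell^{\ast}_{vw}/\ell_{vw}$ is positive only under a Delaunay-type condition, and along the folded band the negative face multiplicities make the sign genuinely indefinite. A reasonable line of attack is to first prove the statement under a Delaunay hypothesis on $S$, so that the interior block of $\hat{\triangle}$ is a genuine Laplacian and a $\hat{\triangle}$-harmonic $\delta f$ cannot attain an interior extremum; then analyze the boundary–augmented block separately, exploiting that flatness at $V(\partial S)$ and at $\hat{v}$ means the two disks fold onto one another perfectly, so the ``top'' angle contributions mirror the ``bottom'' ones; and finally combine the two pieces via a Schur-complement argument, using the Mobius action to quotient out the three-dimensional kernel before applying the extremum principle. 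Controlling the sign-indefinite folded band in full generality is the crux, and it is the same difficulty that leaves existence and uniqueness for Problems \ref{packing problem 1} and \ref{packing problem 2} open outside the cases treated in \cite{BowSte1}, \cite{MR}, and \cite{RHD}.
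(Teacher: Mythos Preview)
This statement is a \emph{conjecture} in the paper, not a theorem: the paper does not prove it. What the paper supplies, in the paragraphs following the conjecture, is only a reformulation and a dimension count. It passes from labels to M-weighted points (via the monodromy theorem and Proposition~\ref{Prop:conf and mweighted}), observes that there are $V(\hat{S})+E(\hat{S})=4V(\hat{S})-6$ constraints $\xi_v\ast\xi_v=\hat\alpha_v$, $\xi_v\ast\xi_w=\hat\eta_{vw}$ against $4V(\hat{S})$ unknowns, writes the differential of the constraint map as an explicit block matrix whose rows are the $\xi_v^T$ and $\xi_w^T$, and declares the conjecture equivalent to this matrix having full rank. The paper stops there.

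Your plan is compatible with that reformulation---your second paragraph is essentially the same translation to M-weighted points and $O(3,1)$-rigidity---but you go further by proposing concrete analytic tools: the signed Laplacian $\hat\triangle$, a maximum principle under a Delaunay hypothesis, and a Schur-complement splitting into interior and folded-band blocks. The paper offers none of these; it only records the linear-algebra target. You also correctly isolate the obstruction: the folded band contributes with negative face multiplicity, so $\hat\triangle$ has sign-indefinite edge weights and the standard connected-graph-Laplacian argument (kernel $=$ constants) does not apply. That is precisely why the paper places this in the open-problems section. So there is no proof of the paper's to compare against; your proposal is a reasonable program, honestly flagged as incomplete, whose crux coincides with the difficulty the paper itself leaves unresolved.
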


We already know that the Mobius transformations form a three-dimensional
family of deformations through flat labels, so we could try to use the
implicit function theorem to show that there are no more. Instead of
formulating the problem on labels, we can use M-weighted points, since there
is a correspondence between M-weighted points and flat labels. Thus it will be
sufficient to show that Mobius transformations are the only deformations of
M-weighted points $\left(  \hat{S},\xi\right)  $ that fix $\xi_{v}\ast\xi
_{v}=\alpha_{v}$ for all $v\in V\left(  \hat{S}\right)  $ and $\xi_{v}\ast
\xi_{w}=\eta_{vw}$ for $vw\in E\left(  \hat{S}\right)  .$

Before reformulating the problem using the implicit function theorem, we count
the dimensions. The number of constraints is $V\left(  \hat{S}\right)
+E\left(  \hat{S}\right)  .$ The number of variables is $4V\left(  \hat
{S}\right)  $ since there is a $\xi_{v}\in\mathbb{R}^{4}$ for each vertex.
Since $\hat{S}$ is topologically a triangulation of the sphere, we have
$3F\left(  \hat{S}\right)  =2E\left(  \hat{S}\right)  $ and Euler's formula
$V\left(  \hat{S}\right)  -E\left(  \hat{S}\right)  +F\left(  \hat{S}\right)
=2.$ It thus follows that
\[
V\left(  \hat{S}\right)  +E\left(  \hat{S}\right)  =4V\left(  \hat{S}\right)
-6.
\]
Since the Mobius transformations form a six dimensional set of deformations,
if we can show that the differential of the map of the variables to the
constraints is injective, then we are done by the implicit function theorem.

We can label the constraints first by the equations determined by vertices and
then by the equations determined by edges. For the differential of the map, we
can arrange the matrix to have the schematics as follows:
\[
\left[
\begin{array}
[c]{cccc}%
\xi_{v_{1}}^{T} & 0 & \cdots & 0\\
0 & \xi_{v_{2}}^{T} & \cdots & 0\\
0 & 0 & \ddots & 0\\
0 & 0 & \cdots & \xi_{v_{N}}^{T}\\
\xi_{v_{2}}^{T} & \xi_{v_{1}}^{T} & \cdots & 0\\
0 & 0 & \cdots & \\
\vdots & \xi_{v_{k}}^{T} & \cdots & \\
\xi_{v_{j}}^{T} & 0 & \cdots & \\
\vdots & \vdots & \vdots &
\end{array}
\right]
\]
where the first $N=V\left(  \hat{S}\right)  $ rows correspond to the vertices
and the remaining rows correspond to the edges. In the first $N$ rows, each
row has one block of four potentially nonzero entries given by the row vector
$\xi_{v}^{T},$ the transpose of the M-weighted point corresponding to vertex
$v.$ Below $\xi_{v}^{T}$ are all the $\xi_{w}^{T}$ such that $vw\in E\left(
\hat{S}\right)  .$ In the schematic above, it is assumed that $v_{1}v_{2}\in
E\left(  \hat{S}\right)  .$ The conjecture is now equivalent to showing that
this matrix has full rank.

Another natural question is what happens when the curvatures are not zero.
Brief numerical study suggests that the labels are rigid in this case.

\begin{conjecture}
Under an appropriate assumption on the conformal structure, if the curvatures
are all nonzero then the only deformations of the label that preserve the
curvature scale all of the labels equally.
\end{conjecture}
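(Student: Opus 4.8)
The plan is to split the statement into an infinitesimal rigidity claim modulo scaling together with a global bootstrap. If $f(t)$ is a deformation preserving the curvature function, then differentiating $K(f(t))\equiv K(f(0))$ and using the formula quoted from \cite{G4} gives $\triangle_{f(t)}\dot f(t)=0$ for all $t$, where $\triangle_{f}$ is the label-dependent discrete Laplacian $\triangle_{f}u_v=\sum_{vw\in E}\tfrac{\ell_{vw}^{\ast}}{\ell_{vw}}(u_w-u_v)$. Since replacing $f$ by $f+c\mathbf 1$ scales every edge length by $e^{c}$ and leaves every angle, hence every $K_v$, unchanged, the constant vector $\mathbf 1$ always lies in $\ker\triangle_{f}$, and ``scaling all labels equally'' means exactly $f(t)=f(0)+c(t)\mathbf 1$. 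So it suffices to prove that, for every label $f$ for which the geometry is realized (all faces nondegenerate) and $K_v\neq 0$ at all vertices, and under whatever hypothesis on $(\alpha,\eta)$ turns out to be needed, $\ker\triangle_{f}=\mathbb R\mathbf 1$; then $\dot f(t)\in\mathbb R\mathbf 1$ for all $t$ and we are done.

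For the reduced statement the natural tool is the variational structure. By \cite{G4} the matrix $\bigl(\partial K_v/\partial f_w\bigr)$ is symmetric --- the off-diagonal entry is $\ell_{vw}^{\ast}/\ell_{vw}$, manifestly symmetric in $v,w$ --- so the $1$-form $\sum_{v}K_v\,df_v$ is closed and integrates to a function $\mathcal E(f)$ with $\nabla\mathcal E=K$ and $\mathrm{Hess}\,\mathcal E=\triangle_{f}$. Because $\sum_{v}K_v=0$ identically, $\mathcal E$ is constant along the $\mathbf 1$-direction and descends to the quotient of label space by scaling; on that quotient $\ker\triangle_{f}=\mathbb R\mathbf 1$ is precisely nondegeneracy of $\mathrm{Hess}\,\mathcal E$ at $[f]$, i.e. the curvature map $[f]\mapsto K$ is a local diffeomorphism near $[f]$ (note the source has dimension $V(\hat S)-1$ and the targets lie in the hyperplane $\{\sum K_v=0\}$, also of dimension $V(\hat S)-1$, so this is the expected statement). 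Global injectivity then follows in the usual way, either from the convex functional $\mathcal E$ directly, or by showing the curvature map is proper on the open set where all $K_v\neq 0$ --- properness can fail only at degenerations of the triangulation or as some $K_v\to 0$ --- and invoking that a proper local diffeomorphism onto the set of attainable curvature vectors, which one expects to be convex (cut out by linear inequalities as in the classical packing case), is a bijection.

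For the local step a natural candidate hypothesis on the conformal structure is the nondegeneracy condition of \cite{GT} together with nonnegativity of the dual edge lengths $\ell_{vw}^{\ast}$ (a discrete Delaunay-type condition); then $\triangle_{f}$ has nonnegative weights and a maximum-principle argument on the connected complex $\hat S$ gives $\ker\triangle_{f}=\mathbb R\mathbf 1$ outright. But the clause ``under an appropriate assumption'' is doing genuine work, because in the flat case $K\equiv 0$ the M\"obius deformations $\delta f_i=2(a,b)\cdot p_i+c$ of Proposition \ref{prop: mobius invariance} are honest nonconstant elements of $\ker\triangle_{f}$: no hypothesis that ignores the curvature can possibly suffice, and the optimal one is not known. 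I expect the genuinely hard step to be clarifying how the inequality $K_v\neq 0$ excludes nonconstant kernel vectors when $\triangle_{f}$ is not assumed semidefinite. The line I would pursue is: given a nonconstant $u\in\ker\triangle_{f}$, try to integrate it --- via a monodromy/holonomy argument in the spirit of Theorems \ref{monodromy theorem 1} and \ref{Monodromy Theorem 2} --- to a one-parameter family $f(t)$ of labels all carrying the same curvature vector, and then argue that such a family forces a planar (M\"obius) structure on at least a subcomplex, where the curvatures must vanish, contradicting $K_v\neq 0$. Making this integration-and-monodromy argument precise, and reading off from it the exact condition on $(\alpha,\eta)$, is the crux of the problem.
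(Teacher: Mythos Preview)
This statement is presented in the paper as an open \emph{conjecture} (in the Open problems section, under Uniqueness), not a theorem; the only support given is that ``brief numerical study suggests that the labels are rigid in this case.'' There is no proof in the paper to compare your proposal against.

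Your framework---differentiate to reduce to $\ker\triangle_f=\mathbb{R}\mathbf{1}$ via the Hessian of the curvature functional---is the natural first move, and you correctly identify the hard step as ruling out nonconstant kernel elements. But two of the mechanisms you propose are contradicted by the paper itself. First, your Delaunay/maximum-principle route assumes that once $\ell_{vw}^\ast\geq 0$ the operator $\triangle_f$ has nonnegative edge weights and hence only constants in its kernel. However, the curvature on $\hat S$ is \emph{not} the standard angle deficit: at boundary vertices the angles from augmented and interior faces enter with opposite signs, and at $\hat v$ the overall sign is reversed. The Jacobian $\partial K/\partial f$ is therefore not a graph Laplacian with uniformly signed weights, and the paper explicitly flags this, noting (in the Existence subsection) that ``we do not have strict monotonicity of the curvatures due to the effect of the unusual boundary curvature definitions.'' Second, you appeal to ``the convex functional $\mathcal E$ directly'' for global injectivity, but the paper states (in the Computation subsection) that this functional ``is clearly not convex.'' Your closing monodromy/integration idea is, as you acknowledge, speculative. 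In sum, the proposal is an outline that runs straight into the obstacles the paper already identifies as the reason the conjecture remains open; it does not close the gap.
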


\subsection{Existence}

The existence of circle packings of a disk was proven by Koebe \cite{Koebe},
Andreev \cite{Andr, Andr2}, and a new proof was given by Marden-Rodin
\cite{MR} in the spirit of work of Thurston \cite{Thurs}. A different proof
was given by Beardon and Stephenson \cite{BeaSte} (see also \cite{BowSte, Bow, Ste}) 
that uses a method similar to the Perron method in
proving existence of solutions to elliptic PDE by considering the hyperbolic
circle packing. This has the advantage to having a unique label (since
Euclidean labels can be changed by elements of the Mobius group) and is
possible partly because internally tangent circles are horocycles. In
\cite{BowSte} it was shown that other boundary conditions (such as orthogonal
intersection with the unit circle) can be realized in the hyperbolic
background by considering constant curvature curves instead of just circles.
The main obstacles to using this method in our setting is that we do not have
strict monotonicity of the curvatures due to the effect of the unusual
boundary curvature definitions, and the space of solutions (satisfying
triangle inequality) is not convex. One may be able to deal with the latter
issue using the methods in \cite{BPS} and \cite{Luo2}, which extend locally
convex functionals continuously by constants, allowing angles to be a
continuous function of three lengths regardless of whether the triangle
inequality is satisfied.

\subsection{Computation}

One way of finding solutions is suggested by the work of Chow and Luo in the
setting of triangulated surfaces without boundary \cite{CL}. They propose a
geometric flow of radii that is essentially a gradient flow of a convex
functional. This idea was later improved by X. Gu, who suggested using
Newton's method to do the computation (see \cite{DGL}). In our setting, one
can also try Newton's method to find solutions. Although we have not yet shown
that the Jacobian matrix is nonsingular if the label is not flat, one can
still apply Newton's method using a pseudoinverse instead of inverse as in
\cite{Gay}. This may be necessary, since we know that solutions do, in fact,
have singular Jacobians due to Proposition \ref{prop: mobius invariance}.

The gradient flow idea does not work with this functional, which is clearly
not convex. However, one could try other flows that do not arise as gradient
flows. One particular choice is to take the following:
\begin{align*}
\frac{df_{v}}{dt} &  =-K_{v}~~\text{if }v\neq\hat{v},\\
\frac{df_{\hat{v}}}{dt} &  =K_{\hat{v}}.
\end{align*}
This flow has the advantage of locally looking like a heat flow on the
curvature at the interior vertices and the augmented vertex, so that it is
infinitesimally trying to make both flat. However, it is unclear what this is
doing to the boundary vertices. Preliminary study of this flow numerically is
promising, finding uniformizations relatively quickly. Figures \ref{mult},
\ref{pack 2}, and \ref{pack} show uniformizations computed using Mathematica's
NDSolve with time up to 200 starting from the triangulation in Figure
\ref{orig triang}. It is still an open problem whether the convergence is
exponential as is the case of combinatorial Ricci flow.%

\begin{figure}
[ptb]
\begin{center}
\includegraphics[
trim=-0.763780in 0.000000in -0.763780in 0.000000in,
natheight=3.359100in,
natwidth=3.467000in,
height=1.6895in,
width=2.4965in
]%
{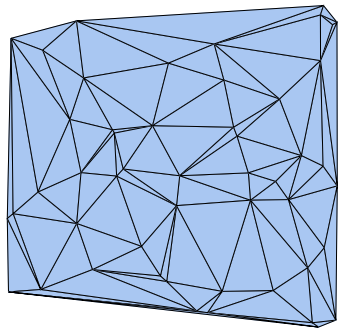}%
\caption{Original triangulation, generated as a random Delaunay
triangulation.}%
\label{orig triang}%
\end{center}
\end{figure}
\begin{figure}
[ptbptb]
\begin{center}
\includegraphics[
natheight=3.450400in,
natwidth=3.467000in,
height=1.7352in,
width=1.7426in
]%
{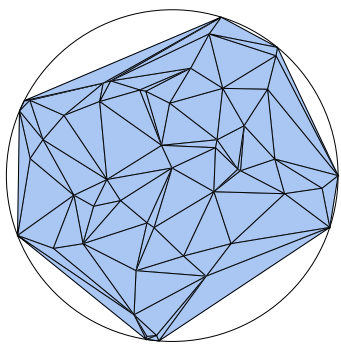}%
\caption{Multiplicative uniformization, with boundary vertices on unit
circle.}%
\label{mult}%
\end{center}
\end{figure}
\begin{figure}
[ptbptbptb]
\begin{center}
\includegraphics[
natheight=3.450400in,
natwidth=3.467000in,
height=1.7501in,
width=1.7592in
]%
{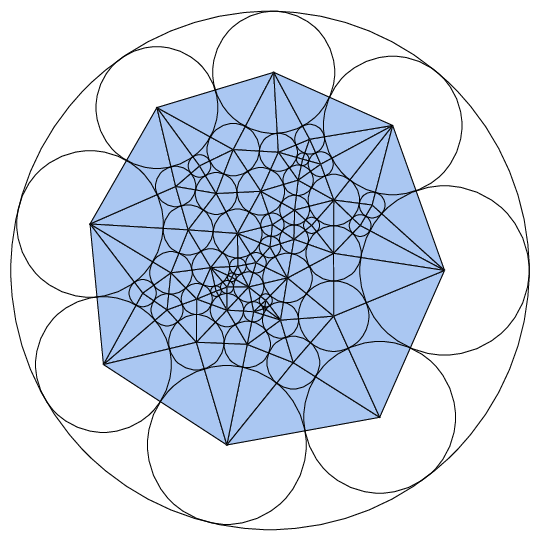}%
\caption{Circle packing uniformization, with boundary circles internally
tangent to unit circle.}%
\label{pack 2}%
\end{center}
\end{figure}
\begin{figure}
[ptbptbptbptb]
\begin{center}
\includegraphics[
natheight=3.051000in,
natwidth=3.467000in,
height=1.5508in,
width=1.7592in
]%
{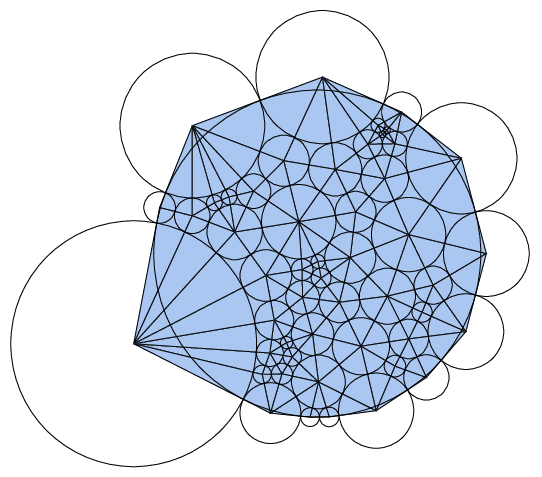}%
\caption{Circle packing uniformization, with boundary circles orthogonal to
unit circle.}%
\label{pack}%
\end{center}
\end{figure}

\section{Appendix:\ Mobius transformation computations}

We consider small perturbations of the identity by Mobius transformations. A
general perturbation of the identity is of the form
\[
I+\varepsilon M
\]
for small $\varepsilon,$ where $I$ is the identity matrix and
\[
M=\left(
\begin{array}
[c]{cccc}%
a & d & \ell & g\\
b & e & m & h\\
q & p & n & s\\
c & f & r & k
\end{array}
\right)  .
\]
For $\left(  I+\varepsilon M\right)  $ to be a Mobius transformation, we must
have
\[
\left(  I+\varepsilon M\right)  ^{T}\left(
\begin{array}
[c]{cccc}%
1 & 0 & 0 & 0\\
0 & 1 & 0 & 0\\
0 & 0 & 1 & 0\\
0 & 0 & 0 & -1
\end{array}
\right)  \left(  I+\varepsilon M\right)  =\left(
\begin{array}
[c]{cccc}%
1 & 0 & 0 & 0\\
0 & 1 & 0 & 0\\
0 & 0 & 1 & 0\\
0 & 0 & 0 & -1
\end{array}
\right)  .
\]
Looking at this up to $O\left(  \varepsilon^{2}\right)  ,$ we find that the
family takes the form
\[
\left(
\begin{array}
[c]{cccc}%
1 & \varepsilon r & -\varepsilon b & -\varepsilon a\\
-\varepsilon r & 1 & -\varepsilon d & -\varepsilon c\\
\varepsilon b & +\varepsilon d & 1 & \varepsilon t\\
-\varepsilon a & -\varepsilon c & \varepsilon t & 1
\end{array}
\right)
\]
for real numbers $a,b,c,d,t,r.$ To compute the derivative of the action of
such a family of Mobius transformations, we compute%
\[
\xi=\left(
\begin{array}
[c]{cccc}%
1 & \varepsilon r & -\varepsilon b & -\varepsilon a\\
-\varepsilon r & 1 & -\varepsilon d & -\varepsilon c\\
\varepsilon b & +\varepsilon d & 1 & \varepsilon t\\
-\varepsilon a & -\varepsilon c & \varepsilon t & 1
\end{array}
\right)  \left(
\begin{array}
[c]{c}%
x\\
y\\
\frac{1}{2}\left(  x^{2}+y^{2}-W-1\right) \\
\frac{1}{2}\left(  x^{2}+y^{2}-W+1\right)
\end{array}
\right)
\]
and we get%
\[
\xi^{4}-\xi^{3}=1-\varepsilon\left(  \left(  a+b\right)  x+\left(  c+d\right)
y+t\right)  +O\left(  \varepsilon^{2}\right)  .
\]
Since
\[
\xi^{4}-\xi^{3}=e^{-f}%
\]
we must have that
\[
\delta f=\left(  a+b,c+d\right)  \cdot\left(  x,y\right)  +t.
\]

\section{Appendix: Curvature measures of polyhedral manifolds with
multiplicities\label{sect: curvature measures}}

The curvatures we used on the boundary of the disk in the augmented disk seem
to be designed specifically for our purposes. In this section, we show how
these are related to natural curvature measures; a good reference is
\cite{Mor}. An important property of curvature measures is the valuation property.

\begin{definition}
A measure $\mu$ satisfies the \emph{valuation} property if for measurable sets
$A$ and $B,$ we have
\[
\mu\left(  A\cup B\right)  =\mu\left(  A\right)  +\mu\left(  B\right)
-\mu\left(  A\cap B\right)  .
\]

\end{definition}

The valuation property allows one to compute the measure from constituent
parts. In particular, we can compute curvature measures of polyhedral surfaces
that are not embedded, but that are made up of pieces that are each embedded.
An alternative to having the need for the negative sign is to construct unions
that have the correct multiplicity. In particular, if we assume $A\cup B$ is
constructed by first gluing $A$ and $B$. Since gluing $A$ and $B$ gives
multiplicity $2$ on $A\cap B,$ we need to pull out a copy of $A\cap B,$ or
glue in a copy of $A\cap B$ with multiplicity $-1.$ Thus, the computation of
the measure with the valuation property (in this case, curvature) is simply
the sum of the gluings considered with multiplicities.

\begin{example}
\label{surface example}A flower of a vertex is defined to be the set of faces
containing that vertex, which is said to be the center of the flower. We can
compute the curvature at the center vertex in a flower by gluing together each
constituent triangle around the flower. In order to keep multiplicity equal to
one at all points we specify that the vertex has multiplicity $1$, the
(closed) edges have multiplicity $-1,$ and the triangles have multiplicity
$1.$ In each triangle $f_{i}$ with edges $e_{j},$ we can specify the curvature
in terms of $K\left(  v,v\right)  =2\pi,$ $K\left(  v,e_{j}\right)  =-\pi$,
$K\left(  v,f_{i}\right)  =\pi-\alpha_{i},$ where $\alpha_{i}$ is the interior
angle at the vertex $v$ of triangle $f_{i}.$ These formulas arise from the
tube formulas computing the change in area or volume of small balls around the
triangle (isometrically embedded into Euclidean space of any dimension); see
\cite{Mor}. We can now sum to get the curvature at $v$ to be%
\begin{align*}
K\left(  v\right)   &  =K\left(  v,v\right)  +\sum_{e_{i}>v}K\left(
v,e_{i}\right)  +\sum_{f_{j}>v}K\left(  v,f_{j}\right) \\
&  =2\pi-\sum_{f_{j}>v}\alpha_{j},
\end{align*}
which is the usual definition for curvature at a vertex.
\end{example}

In our setting, we have taken a triangulated disk, which can be given
multiplicities as above in the interior, and attached another disk in the
augmentation. The multiplicities of each of the simplices in the augmented
disk should be as follows to allow for a total multiplicity of zero when the
disks fit on top of each other (curvatures are all zero):%

\[%
\begin{tabular}
[c]{|l|l|l|}\hline
\textbf{Location} & \textbf{Simplex type} & \textbf{Multiplicity}%
\\\hline\hline
interior & vertex & $1$\\\hline
boundary & vertex & $0$\\\hline
augmented & vertex & $-1$\\\hline
interior & edge & $-1$\\\hline
boundary & edge & $0$\\\hline
augmented & edge & $1$\\\hline
interior & face & $1$\\\hline
augmented & face & $-1.$\\\hline
\end{tabular}
\
\]
Using these multiplicities and the usual definition of curvature measure at a
vertex of a polyhedral manifold, one gets precisely the definitions given
above for curvatures.

The advantage to this definition is the relation to tube formulas (see
\cite{Mor}). The valuation property allows one to extend such tube formulas to
more general types of surfaces as the one we consider here: a folder over
disk. Here is a formulation of polyhedral manifold with multiplicities that
allows for the proper generalization of tube formulas.

\begin{definition}
A \emph{polyhedral manifold with multiplicities} is a polyhedral manifold $M$
together with a multiplicity function $\mu:\Sigma\rightarrow\mathbb{Z}$, where
$\Sigma$ is the collection of all simplices in $M.$ If the dimension of $M$ is
two, we call $M$ a surface.
\end{definition}

\begin{definition}
If $x\in M,$ the \emph{multiplicity} at $x$ is defined as
\[
\mu\left(  x\right)  =\sum_{\sigma>x}\mu\left(  \sigma\right)
\]
where the sum is over all simplices containing $x.$ If there exists $m$ such
that $\mu\left(  x\right)  =m$ for all $x\in M,$ we say the multiplicity of
$M$ is equal to $m.$
\end{definition}

We note that a polyhedral surface as described in Example
\ref{surface example} is a polyhedral manifold with multiplicity $1.$ We can
now define curvature for a polyhedral surface with multiplicities.

\begin{definition}
The \emph{curvature} $K_{v}$ at a vertex $v\in V\left(  M\right)  $ of a
polyhedral surface with multiplicities $\left(  M,\mu\right)  $ is equal to
\[
K_{v}=2\pi\mu\left(  v\right)  +\sum_{e>v}\pi~\mu\left(  e\right)  +\sum
_{f>v}\left(  \pi-\theta_{v<f}\right)  \mu\left(  f\right)  .
\]

\end{definition}

Example \ref{surface example} describes the relationship to the usual
definition of curvature on a polyhedral manifold. These manifolds seem to be
related to the theory of currents in geometric measure theory (see, e.g.,
\cite{Morg}). In the case of zero curvature, it is clear that there is a map
from the flat polyhedral manifold with multiplicities to a current (in
$\mathbb{R}^{2}$ or $\mathbb{R}^{3},$ for instance). In the case of augmented
domain, the goal is to find a polyhedral manifold with multiplicities that
maps to a current that is equivalent to the zero current (since multiplicities
are such that the top and bottom parts of the domain sum to zero everywhere).
In general, the existence of a current occurs on each hinge (a pair of
simplices sharing a codimension 1 simplex, see \cite{AK}, \cite{ES}). Hinges
have bistellar flips, and certain geometric invariants do not change with
bistellar flips (volume, the induced distance function, the curvature) while
others do (conformal variations, Laplacians). The current associated to a
hinge is a generalization of volume, and does not change with a bistellar flip.

\begin{proposition}
Any hinge can be mapped to a current. Note that the image of a hinge and its
bistellar flip are the same.
\end{proposition}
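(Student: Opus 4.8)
The plan is to build the current by developing the hinge isometrically into Euclidean space and integrating, and then to extract the invariance under a bistellar flip from the observation that such a flip alters only the ``diagonal'' simplex, which carries no net boundary. First I would write the hinge as a pair of top-dimensional simplices $\sigma_{1},\sigma_{2}$ sharing a codimension-one face $\tau$; in the surface case these are two triangles sharing an edge. A hinge is flat, so it develops isometrically into $\mathbb{R}^{n}$ (into $\mathbb{R}^{2}$ for a surface); I would fix such a development together with orientations of $\sigma_{1},\sigma_{2}$ that are compatible across $\tau$, and define the associated current to be the integral current
\[
\mathcal{T}=\mu(\sigma_{1})[\sigma_{1}]+\mu(\sigma_{2})[\sigma_{2}],
\]
where $[\sigma_{i}]$ is the current of integration over the developed simplex $\sigma_{i}$ and $\mu$ is the multiplicity function. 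When the two multiplicities agree this is just their common value times integration over the developed union $\sigma_{1}\cup\sigma_{2}$. This produces the map asserted in the first sentence.

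Next I would prove the invariance. Label the vertices of a surface hinge so that $\tau=ab$ with apex vertices $c,d$; a bistellar flip replaces $\tau$ by $\tau'=cd$, replaces $\sigma_{1},\sigma_{2}$ by the two triangles containing $cd$, and leaves the four outer edges $bc,ca,ad,db$ (and hence, after unfolding, the positions of the four vertices) unchanged. The key step is a boundary computation: the terms supported on $\tau$ enter $\partial\mathcal{T}$ with opposite signs from the two incident triangles and cancel, so $\partial\mathcal{T}$ is the (signed) integration current over the four outer edges alone, and the same holds for the flipped current $\mathcal{T}'$ once the terms on $\tau'$ cancel. Since the flip does not touch the outer edges, $\partial\mathcal{T}=\partial\mathcal{T}'$, so $\mathcal{T}-\mathcal{T}'$ is a compactly supported integral $2$-current in $\mathbb{R}^{2}$ with vanishing boundary; such a current is represented by a compactly supported integer-valued $L^{1}$ density whose distributional differential is zero, hence by a constant density, hence by $0$ (the constancy theorem). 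Equivalently, and more concretely: when the developed quadrilateral on $a,b,c,d$ is convex, both $\mathcal{T}$ and $\mathcal{T}'$ are literally integration over that quadrilateral, and the overlapping configuration is handled by the subdivision identity $[abc]=[abd]+[bcd]+[cad]$ valid when $d$ is interior to $abc$. Thus $\mathcal{T}=\mathcal{T}'$, which is exactly the claim that a hinge and its flip have the same image. The same argument runs verbatim in dimension $n$, with the outer edges replaced by the outer codimension-one faces and the edge flip by the corresponding Pachner move.

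I expect the main obstacle to be bookkeeping rather than analysis: one must fix orientation conventions so that ``compatible across $\tau$'' is meaningful (it is the convention under which the induced orientations on $\tau$ cancel) and so that the flipped simplices inherit consistent orientations, and one must record how $\mu$ transforms under the flip (the multiplicities of the outer faces and of the ambient region are unchanged, while the diagonal simplex is simply exchanged), so that the cancellation of the diagonal's boundary goes through with multiplicities attached. Once these conventions are pinned down there are no estimates to make, because a hinge is finite and flat and its development into $\mathbb{R}^{2}$ (or $\mathbb{R}^{n}$) is exact; the only nontrivial input is the constancy theorem invoked above.
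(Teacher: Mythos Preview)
Your construction---unfold the hinge into the plane and take the weighted sum $\mu(\sigma_{1})[\sigma_{1}]+\mu(\sigma_{2})[\sigma_{2}]$---is a valid map to a current, but it is not the paper's map. The paper's (very brief) proof splits into two cases: when the two face multiplicities agree it unfolds, as you do, but when they differ it \emph{folds} the hinge so the two triangles overlap and then adds the multiplicities on the overlap. So for unequal multiplicities your current and the paper's current are genuinely different objects.

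More importantly, your flip-invariance argument has a gap in precisely the unequal-multiplicity case. You assert that the contributions to $\partial\mathcal{T}$ supported on the diagonal $\tau$ cancel because the two triangles induce opposite orientations there; but the two contributions are $\mu(\sigma_{1})[\tau]$ and $-\mu(\sigma_{2})[\tau]$, and these cancel only when $\mu(\sigma_{1})=\mu(\sigma_{2})$. If the multiplicities differ, $\partial\mathcal{T}$ carries the residual $(\mu(\sigma_{1})-\mu(\sigma_{2}))[\tau]$ on the old diagonal while $\partial\mathcal{T}'$ carries an analogous term on the new diagonal; hence $\mathcal{T}-\mathcal{T}'$ is not closed and the constancy theorem does not apply. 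Indeed, with your unfolded definition the two currents are visibly unequal in this case: the multiplicity density jumps across the old diagonal for $\mathcal{T}$ and across the new one for $\mathcal{T}'$. You flag at the end that one must ``record how $\mu$ transforms under the flip,'' but there is no assignment of multiplicities to the flipped triangles that makes your unfolded currents match when $\mu(\sigma_{1})\neq\mu(\sigma_{2})$. The paper's fold-over construction in the unequal case is presumably meant to address this, though the paper itself offers no argument for the flip statement; if you wish to keep your approach you should either restrict the flip assertion to hinges with equal face multiplicities, or adopt the folded construction and argue invariance there.
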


\begin{proof}
For a hinge, if the two faces have the same multiplicity, we simply mapped to
the unfolded hinge with that multiplicity. If the two have different
multiplicities, we fold the hinge over and add the multiplicities on the overlap.
\end{proof}

\begin{remark}
The curvature measures here and their generalizations to higher dimensions
have various names including Gauss-Bonnet curvatures and Lipschitz-Killing
curvatures. These curvatures appear in essentially three places: tube
formulas, kinematic formulas, and heat trace formulas. See, e.g., \cite{CMS}.
It would be interesting to better understand the relationship of these on
polyhedral manifolds with multiplicities.
\end{remark}

\begin{remark}
Marden and Rodin used a similar construction in \cite{MR} to prove the
Andreev-Thurston theorem on the sphere. If one makes the triangle have
multiplicity -1 then one can replace their formulation of having a point
$\left(  2\pi/3,2\pi/3,2\pi/3,0,\ldots,0\right)  $ in the image of $f$ with
the existence of a flat label using our definition of curvature.
\end{remark}

\end{document}